\newtheorem{theorem}{Theorem}
\newtheorem{lemma}[theorem]{Lemma}
\newtheorem{definition}[theorem]{Definition}
\newtheorem{proposition}[theorem]{Proposition}
\theoremstyle{definition}
\newtheorem{remark}[theorem]{Remark}
\newcommand{\R}{\mathbb{R}}
\newcommand{\N}{\mathbb{N}}
\newcommand{\vp}{\varepsilon}
\newcommand{\beq}{\begin{equation}}
\newcommand{\eeq}{\end{equation}}
\newcommand{\bpm}{\begin{pmatrix}}
\newcommand{\epm}{\end{pmatrix}}
\DeclareMathOperator{\diag}{diag}
\title{Stabilisation of the complex double integrator  by means of a saturated linear feedback}
\author{{Yacine Chitour}
\thanks{Y. Chitour is with
Laboratoire des Signaux et Syst\`emes (L2S, UMR CNRS 8506), Universit\'e Paris-Saclay - CNRS - CentraleSupelec, 3, rue Joliot Curie, 91192, Gif-sur-Yvette, France,
\texttt{yacine.chitour@l2s.centralesupelec.fr}}
 \footnote{This research was partially supported by the iCODE Institute, research project of the IDEX Paris-Saclay, and by the Hadamard Mathematics LabEx (LMH) through the grant number ANR-11-LABX-0056-LMH in the ``Programme des Investissements d'Avenir''.}
}
\begin{document}

\maketitle
\begin{center}
To E. D. Sontag, for his $70$th birthday.
\end{center}

\vspace{2cm}

\begin{abstract} Consider the saturated complex double integrator, i.e., the linear control 
system $\dot x=Ax+B\sigma(u)$, where $x\in\R^4$, $u\in\R$, $B\in\R^4$, the $4\times 4$ matrix $A$ is not 
diagolizable and admits a non zero purely imaginary eigenvalue of multiplicity two, the pair $(A,B)$ is 
controllable and $\sigma:\R\to\R$ is a saturation function. We prove that there exists a linear feedback 
$u=K^Tx$ such that the resulting closed loop system given by $\dot x=Ax+B\sigma(K^Tx)$ is globally 
asymptotically stable with respect to the origin.
\end{abstract}

\tableofcontents

\section{Introduction}\label{se:intro}
In this paper, we address the issue of stabilizing a finite dimensional linear control system by means of a saturated control. That is, one has 
\begin{equation}\label{eq:sys}
(\Sigma)\quad \dot x=Ax+B\sigma(u),\ \ x\in \R^n,\ u\in\R^m,
\end{equation}
where $n,m$ are positive integers, and $A$ and $B$ are $n\times n$ and $n\times m$ matrices respectively with real entries. Here $\sigma=(\sigma_i)_{1\leq i\leq m}$,  where each 
$\sigma_i:\R\to\R$ is a saturation function, i.e., any  locally Lipschitz function so that 
there exist positive real numbers $a_1,b_1,a_2,b_2$ where 
$s_{a_1,b_1}(x)\leq \sigma_i(\xi)\leq s_{a_2,b_2}(x)$ for $\xi\in\R$, with 
$s_{a,b}:\R\to\R$ is the function defined for any positive real numbers $a,b$ by 
$s_{a,b}(x)=\frac{a\ x}{\max(b,\vert x\vert)}$. For instance, $\arctan$, $\tanh$ or the standard saturation function $s_{1,1}$ are typical exemples of saturation functions. 
We refer to \cite{Hu-Lin} and \cite{Tarbour} as standard references on the study of these systems in control theory. For the rest of the paper, we will assume that the pair $(A,B)$ is controllable. 

The basic issue consists in finding a continuous feedback law $u=k(x)$ such that the closed system associated with $(\Sigma)$ and $k(\cdot)$ and equal to $\dot x=Ax+B\sigma(k(x))$ is globally asymptotically stable (GAS for short) with respect to the origin. It has been shown (in \cite{SS-90} for instance) that a necessary condition for the existence of such a feedback $k(\cdot)$ is that the real part of any eigenvalue of $A$ is non negative. Note also that optimal control can furnish a stabilizing feedback, which is discontinuous in general. It is not difficult to see that the above mentioned stabilization issue gets not so easy in case where $A$ admits non trivial Jordan blocks associated with purely imaginary eigenvalues. One can first try to seek linear feedbacks, i.e., $k(x)=K^Tx$ with $K$ an $m\times n$ matrix. However, it has been established in \cite{Fuller} that, if $A$ is a Jordan block of order $3$, then 
$(\Sigma)$ cannot be stabilized by a linear feedback law, a result which has been extended  in \cite{SY-91} to the case where $A$ is any Jordan block of order $n\geq 3$. One had therefore to rely on non linear feedback laws $u=k(x)$ and it is in \cite{Teel} that the stabilization issue was solved for Jordan block of order $n\geq 3$ and scalar input (i.e., $m=1$) by using the celebrated feedback referred to as ``nested saturations". Such a feedback has been also used in \cite{SYS-94} to handle the general case described by \eqref{eq:sys}. As a matter of fact, the solution given in that reference relies on a (partial) solution of a more general problem related to $(\Sigma)$, that is its $L_p$-stabilization. Recall that, once a stabilizing feedback $u=k(x)$ has been determined for $(\Sigma)$, one wants to understand its robustness properties and for that purpose, one considers the input-output map $\phi_{k,p}: d\mapsto x_d$, where the disturbance $d$ belongs to $L_p(\R_+,\R^m)$ for some $p\in [1,\infty]$, and $x_d$ is the (unique) solution of $\dot x=Ax+B\sigma(k(x)+d)$ starting at the origin at $t=0$. If $\phi_{k,p}$ takes values in 
$L_p(\R_+,\R^n)$, then the feedback $k(\cdot)$ is said to be $L_p$-stabilizing and it has finite ($L_p$) gain if $\phi_{k,p}$ is a bounded (non linear) operator. In case $A$ is neutrally stable, $(\Sigma)$ is stabilizable by a linear feedback law, which turns out to have finite gain for every $p\in [1,\infty]$, cf. \cite{LCS} while detailed results have been given in \cite{Chitour2001} for the double integrator relatively to $L_p$-stabilization of several feedback laws. In the general case described by \eqref{eq:sys}, the situation is more complicated since the input-output map 
$\phi_{k,p}$ associated with the nested saturation feedback is not $L_p$-stable in general. The first general solution of a 
feedback law for $(\Sigma)$ with finite $L_p$ gain has been given in \cite{Saberi} inspired by a solution given in \cite{Megretsky} for the stabilization issue and based on high and low gain techniques. Note though that the feedbacks provided by \cite{Saberi} are implicit enough to render their use for practical issues rather difficult and therefore a much simpler solution, based on sliding mode ideas, has been provided in \cite{CHL} for a feedback law for $(\Sigma)$ with finite $L_p$ gain for $A$ equal to any Jordan block of order $n\geq 3$ and scalar input. 

One of the issues left open in that long string of research consists in determining conditions for the existence (or non existence) of linear stabilizing feedbacks for $(\Sigma)$ if the state dimension $n$ is larger than two. In particular, the first case not covered by existing results deals with the so called ``complex double integrator" ($CDI$ for short), i.e., one considers $(\Sigma)$ in the special case $n=4$, $m=1$ and $A$ not diagonalizable with two non zero purely imaginary eigenvalues. It means that 
\beq\label{eq:CDI0}
A\hbox{ is similar to }A_\omega:=\begin{pmatrix}\omega A_0&I_2\\0_2&\omega A_0 \end{pmatrix},
\eeq
where $\omega>0$, $I_2$ and $0_2$ are the $2\times 2$ identity and zero matrices respectively and    
$$
A_0=\bpm 0&-1\\1&0\epm.
$$
In the present paper, we bring a positive answer to the stabilization issue associated with 
$CDI$ by means of a linear feedback, in the case where the saturation function is further assumed to be odd, non decreasing and with a derivative non increasing on 
$\R_+$. The main idea consists in embedding $CDI$ into a continuous family of linear control systems with saturated control
$(T_\varepsilon)_{\varepsilon>0}$ so that $CDI=T_1$ and the stabilization by means of a linear feedback of CDI is equivalent to that of $T_\varepsilon$ for any $\varepsilon>0$. Then, in a first step, one characterizes a limit system $T_0$ for $(T_\varepsilon)_{\varepsilon>0}$, as $\varepsilon$ tends to zero, which is GAS with respect to the origin and also a strict Lyapunov function $V$ associated with $T_0$. It is worth noticing that $T_0$ is a linear control system with saturated control with a radial saturation, cf. \eqref{eq:CDI-T0}. 
The second and more complicated step consists in establishing that $T_\varepsilon$ is GAS with respect to the origin, for $\varepsilon$ small enough. This is done by considering $T_\varepsilon$ as a perturbation of $T_0$ and by proceeding at non trivial estimates of the variations of $V$ along trajectories of 
$T_\varepsilon$.

We close this introduction by proposing a conjecture regarding the stabilization issue associated with $(\Sigma)$ by means of a linear feedback under the condition that $(A,B)$ is controllable. We claim that $(\Sigma)$ is stabilizable  by means of a linear feedback if and only if the purely imaginary eigenvalues of $A$ do not admit any Jordan block of order larger than or equal to three.
\section{Notations and statements of the main result}\label{se:not}
If $x\in\R$, let $E(x)$ be its integer part. When $\vp$ tends to $x_0\in\R\cup{\infty}$, the notation $g(\vp)=O(f(\vp))$ means that there exists $C_0>0$ independent of $\vp$ such that $\vert g(\vp)\vert\leq C_0\vert f(\vp)\vert$ as $\vp$ tends to $x_0$ and the notation $g(\vp)=o(f(\vp))$ means that $\vert g(\vp)\vert\leq C(\vp)\vert f(\vp)\vert$ with $C(\vp)>0$ tending to zero as $\vp$ tends to $x_0$.

If $f:\R\to\R$ is a function and $t_1\leq t_2$ two times, we use $\Delta f\Big\vert_{t_1}^{t_2}$ to denote $f(t_2)-f(t_1)$.

For $n,m\in\N^*$, let $M_{n,m}(\mathbb{R})$ (resp. $M_{n,m}(\mathbb{C})$) be the set of $n\times n$ matrix with real (resp. complex) entries and, if $n=m$, we simply use $M_n(\mathbb{R})$ (resp. $M_n(\mathbb{C})$).
We use $(e_1,e_2)$, $I_2\in M_2(\mathbb{R})$ and $J_2\in M_2(\mathbb{R})$ denote the canonical basis of $\R^2$, the identity matrix of $\R^2$ and the $2$-dimensional real \emph{Jordan block}, i.e., $J_2e_i=e_{i-1}$, for $1\leq i\leq 2$ with the convention that $e_0=0$. We also consider $J_2^c\in M_{4}(\mathbb{R})$ the complex Jordan block defined as $J_2^c=J_2\otimes I_2$. 

For $\omega\geq 0$, we define the matrix $J_2(\omega)$ as follows
\beq\label{eq:J2-om}
J_2(0)=J_2,\quad 
J_2(\omega)=\omega I_2\otimes A_0+J_2^c,\hbox 
{ for }\omega>0.
\eeq 
For $\vp>0$, let $D_\vp$ be the $4$-dimensional diagonal matrix defined by 
\beq\label{eq:dvp}
D_\vp=\diag(\vp^2,\vp^2,\vp,\vp).
\eeq
For $\theta\in S^1$, we use $R_\theta$ to denote the rotation of $\R^2$
of angle $\theta$, i.e. the matrix 
$$
R_\theta=\bpm c_\theta&-s_\theta\\s_\theta&c_\theta\epm,
\quad
\hbox{ where }c_\theta:=\cos(\theta),\ s_\theta:=\sin(\theta).
$$
We use $A_0$ to denote $R_{\pi/2}$.

If $x\in\R^2$, we use $x^{\perp}$ to denote $A_0x$, the orthogonal of $x$. If in addition $x\neq 0$, then $x/\Vert x\Vert\in S^1$ and we use $\theta_x\in [0,2\pi)$ the corresponding angle. In particular, $x=\Vert x\Vert R_{\theta_x}e_1=-\Vert x\Vert R_{\theta_x}e_2^{\perp}$.

\begin{definition}[Saturation function]\label{def:sat}
A function $\sigma:\R\to\R$ is called a scalar \emph{saturation} function if it verifies the following:
\begin{description}
\item[$(s1)$] $\sigma$ is an odd and globally Lipschitz function;
\item[$(s2)$] $\sigma(\xi)\xi>0$ for every non zero $\xi\in\R$, and
$$
\lim_{\xi\to +\infty}\sigma(\xi)=\sigma_\infty>0,\quad \lim_{\xi\to 0}\frac{\sigma(\xi)}{\xi}=\sigma'(0)>0;
$$
\item[$(s3)$] $\sigma$ is non decreasing and $\sigma'$ is non increasing on $\mathbb{R}_+$.
\end{description}
\end{definition}
Examples of saturation functions are $\arctan, \tanh$ and the standard saturation function defined by $\sigma_s(\xi)=\frac{\xi}{\max(1,\vert \xi\vert)}$. Note that Item $(s3)$ is not usually considered in the standard definition of saturation function.
\begin{remark}\label{rem:sat}
As easy consequences of the definition, the following holds true:
\begin{description}
\item[$(c1)$] For $\xi\in\R$, consider
\beq\label{eq:Sig} 
\Sigma(\xi)=\int_0^{\xi}\sigma(v)dv.
\eeq
Then $\Sigma$ is an even, positive definite function tending linearly to infinity as $\vert \xi\vert$ tends to infinity;
\item[$(c2)$]  for every non zero $\xi$, one has that 
$\sigma'(\xi)\leq \sigma(\xi)/\xi$ and $\xi\mapsto \sigma(\xi)/\xi$ is an even function, differentiable on $\R^\ast$ and decreasing over $\R_+^\ast$;
\item[$(c3)$] $\sigma'$ is continuous at $\xi=0$ and there exists $\xi_0>0$ such that $\sigma'(\xi)\geq \sigma'(0)/2$ for $\xi\in [-\xi_0,\xi_0]$.
\end{description}
A proof of the above items is given in Appendix.
\end{remark}

\begin{definition}[Stabilizing linear feedback]\label{def:SLF}
Given a linear control system with input subject to saturation $(\Sigma):\ \dot x=Ax+B\sigma(u)$ with 
$x,B\in\R^4$, $u\in\R$, $A\in M_4(\R)$ and 
$\sigma:\R\to\R$ a saturation function. A vector $K\in\R^4$ is called \emph{a 
stabilizing linear feedback for $(\Sigma)$} if the closed loop system $\dot x=Ax+B\sigma(K^Tx)$ is globally 
asymptotically stable (GAS) with respect to the origin.
\end{definition}
In this paper, we prove the following result.
\begin{theorem}\label{th:main}
Let $(CDI)$ be the saturated complex double integrator, that is  the control system 
given by
\beq\label{eq:CDI-0}
(CDI)\quad\dot x=J_2(\omega)x-b\sigma(u),
\eeq
where $x\in\R^4$, $u\in\R$, $\sigma:\R\to\R$ is a saturation function, $\omega>0$ and 
$$
b=\bpm b_1\\b_2\epm,\quad b_i\in\R^2\hbox{ for }i=1,2,
$$
with $(J_2(\omega),b)$ is controllable. Then, $(CDI)$ admits 
a stabilizing linear feedback.
\end{theorem}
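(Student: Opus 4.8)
The plan is to follow the program announced in the introduction: embed $(CDI)$ into a one-parameter family of closed-loop systems, identify a tractable limit system, and transfer stability back by a perturbation argument. First I would reduce $b$ to a normal form. Writing $\R^4\cong\mathbb{C}^2$ with $A_0$ acting as multiplication by $i$, one has $J_2(\omega)=i\omega\,\idty+N$ where $N$ is the complex nilpotent block, and its (real) centraliser is the set of complex-linear maps $\bpm a & c\\ 0 & a\epm$ with $a,c\in\mathbb{C}$. Controllability of $(J_2(\omega),b)$ forces the lower block $b_2$ to be nonzero as a complex number, and conjugating by a suitable element of this centraliser — which commutes with $J_2(\omega)$ and hence alters neither the dynamics nor the linear character of the feedback — I may assume $b_1=0$ and $b_2=e_1$. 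In complex notation the system becomes $\dot z_1=i\omega z_1+z_2$, $\dot z_2=i\omega z_2-\sigma(u)$ with $u=\mathrm{Re}(\bar\kappa_1 z_1+\bar\kappa_2 z_2)$, the complex numbers $\kappa_1,\kappa_2$ encoding $K$.

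Next I would pass to the rotating frame $w_j=e^{-i\omega t}z_j$, which removes the fast rotation from the drift and turns the system into a planar double integrator $\dot w_1=w_2$, $\dot w_2=-e^{-i\omega t}\sigma\big(\mathrm{Re}(e^{i\omega t}(\bar\kappa_1 w_1+\bar\kappa_2 w_2))\big)$ driven through a rapidly rotating input channel. The family $(T_\varepsilon)$ is then obtained by a low-gain choice of feedback together with the anisotropic scaling $D_\varepsilon$, arranged so that $CDI=T_1$ and so that all the $T_\varepsilon$ are conjugate through $D_\varepsilon$ and a time rescaling; consequently each is GAS iff the others are, and it suffices to treat $\varepsilon$ small. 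In this slow regime the rotation is fast relative to the feedback-driven motion, so averaging over one period in $\omega t$ is legitimate in the limit. The key computation is that, since $\sigma$ is odd, $\frac{1}{2\pi}\int_0^{2\pi}e^{-i\phi}\sigma\big(\mathrm{Re}(e^{i\phi}\zeta)\big)\,d\phi=g(|\zeta|)\,\zeta/|\zeta|$ for a scalar profile $g$ inheriting the saturation shape of $\sigma$; this is exactly the \emph{radial saturation} appearing in $T_0$, whose dynamics are the planar double integrator $\dot w_1=w_2$, $\dot w_2=-g(|\zeta|)\,\zeta/|\zeta|$ with $\zeta=\bar\kappa_1 w_1+\bar\kappa_2 w_2$.

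I would then prove that $T_0$ is GAS and exhibit a strict Lyapunov function $V$. Choosing $\kappa_1,\kappa_2$ with positive real ratio so that the $\bar\kappa_2 w_2$ term acts as saturated damping, a candidate is the sum of the kinetic term $\tfrac12\|w_2\|^2$, a radial potential adapted to the profile $g$, and a small cross term coupling $w_1$ and $w_2$ that renders $\dot V$ negative definite rather than merely negative semidefinite, exactly as in the classical damped-oscillator Lyapunov function; the required strict decrease and radial unboundedness follow from the monotonicity and concavity properties recorded in Remark~\ref{rem:sat}.

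Finally, the perturbation step — which I expect to be the main obstacle — consists in showing that $T_\varepsilon$ remains GAS for $\varepsilon$ small by controlling the variation of $V$ along genuine (non-averaged) trajectories of $T_\varepsilon$. Here the rotation does not decouple exactly: one must estimate $\Delta V\big\vert_{t}^{t+2\pi/\omega}$ over each rotation period and show that the averaged, strictly negative contribution dominates both the oscillatory remainder and the $O(\varepsilon)$ coupling and higher-order terms, uniformly on sublevel sets of $V$ away from the origin. The difficulty is precisely that these bounds must be made quantitative and uniform — the saturation rules out global linear estimates, and the $w_1$-drift is only weakly damped — so the bulk of the work is the non-trivial estimation of these increments. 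Once net decrease of $V$ over periods is established for $\varepsilon$ small enough, GAS of $T_\varepsilon$ follows, and transporting the feedback back through the conjugacy $D_\varepsilon$ yields a stabilising linear feedback $K$ for $CDI=T_1$, proving the theorem.
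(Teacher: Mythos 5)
Your overall architecture coincides with the paper's: the same normal form (your centraliser computation in complex notation is exactly the paper's change of variables in Proposition~\ref{prop:NF}, up to a rotation of $b_2$ and the omitted time/gain rescaling normalising $\sigma_\infty=\sigma'(0)=1$), the same scaling family through $D_\vp$ with the conjugacy observation of Lemma~\ref{le:1-vp}, the same rotating frame, and your averaged profile $g(\vert\zeta\vert)\,\zeta/\vert\zeta\vert$ is precisely the paper's modified saturation $S(\Vert z\Vert)z/\Vert z\Vert$ of Lemma~\ref{le:bvp-conv}. One deviation is your Lyapunov candidate for $T_0$: a kinetic-plus-potential function with a ``small cross term'' is left unverified, and a linear cross term $\delta\, w_1^Tw_2$ in fact ruins positive definiteness and radial unboundedness once the potential saturates to linear growth, so it would at least have to be replaced by a bounded one. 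The paper instead obtains a \emph{strict} Lyapunov function with no cross term, $V_0(z,y)=\Vert y\Vert^2+\int_0^{\Vert z\Vert}S(\xi)d\xi+\int_0^{\Vert z-y\Vert}S(\xi)d\xi$, strictness coming from the positive definiteness of $df$ (Lemma~\ref{le:gradient-f}); this choice is not cosmetic, since the whole perturbation analysis rests on the specific three-term decomposition \eqref{eq:t1}--\eqref{eq:t3} of $\dot V_0$.

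The genuine gap is in the perturbation step, at exactly the point you flag as ``the bulk of the work.'' First, estimating $\Delta V$ over single rotation periods ``uniformly on sublevel sets of $V$ away from the origin'' cannot close the argument: the theorem demands one linear feedback valid for \emph{all} initial data, hence one $\vp_0$ uniform over the unbounded region $\{V_0\geq R\}$. But the coupling term $2y^T\bigl(f(z)-b_\vp\sigma(b_\vp^Tz)\bigr)$ has instantaneous size of order $\Vert y\Vert$ (indeed $\dot V_0=S(\Vert z\Vert)\Vert z\Vert$, unbounded, when $b_\vp^Tz=0$ and $z=y$), and its averaging error per period grows with $\Vert y\Vert$ while the guaranteed per-period decrease does not; so per-period negativity degenerates as $\Vert y\Vert\to\infty$, and your construction would deliver only semiglobal stabilization with an $\vp$ (hence a feedback) depending on the size of the initial condition. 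The paper's key device, absent from your sketch, is Proposition~\ref{prop:key1}: the decrease is measured over windows of length $T(z_0,y_0)$ comparable to $\rho\max(1,\Vert y_0\Vert)$, growing with $\Vert y_0\Vert$, together with the two-case analysis comparing $\Vert z-y\Vert$ to $C_*\Vert z\Vert$ in Lemma~\ref{le:K}. Second, decrease of $V$ away from the origin does not by itself yield convergence \emph{to} the origin; the paper needs a separate endgame — the refined integral estimate \eqref{eq:key4}, Barbalat's lemma to get $K_\vp^Tx\to 0$, and the rewriting of $(S_\vp)$ as a vanishing perturbation of the Hurwitz system $\dot x=\bigl(J_2(2\pi/\vp)-bb^T\bigr)x$, which also gives local exponential stability — and your proposal contains no local or terminal argument at all.
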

\section{Proof of Theorem~\ref{th:main}}\label{se:CDI}
We start the argument by first providing a normal form for $(CDI)$. Since
 $(J_2(\omega),b)$ is controllable, then $b_2$ must be a non zero vector of $\R^2$. Then one gets the 
 following.
\begin{proposition}\label{prop:NF}
The control system $(CDI)$ defined in \eqref{eq:CDI-0} can be brought, up to a linear change of variable and a time rescaling, to the form
\beq\label{eq:CDI-1}
(CDI)_1\quad
\left\{
 \begin{array}{lll}
 \dot x_1&=& 2\pi A_0x_1+x_2,\\  
 \dot x_2&=& 2\pi A_0x_2-e_2\sigma(u),
\end{array}
\right.
\eeq
where $\sigma$ is a saturation function with $\sigma_\infty=\sigma'(0)=1$.
\end{proposition}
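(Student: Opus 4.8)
The plan is to reach $(CDI)_1$ through an explicit sequence of invertible linear changes of state variable, all lying in the commutant of $A_0$, followed by a simultaneous state/time rescaling and a renormalisation of the saturation. Writing $x=(x_1,x_2)$ with $x_i\in\R^2$ and using $J_2(\omega)=\bpm \omega A_0&I_2\\0&\omega A_0\epm$, system \eqref{eq:CDI-0} reads $\dot x_1=\omega A_0x_1+x_2-b_1\sigma(u)$ and $\dot x_2=\omega A_0x_2-b_2\sigma(u)$. As already observed, controllability of $(J_2(\omega),b)$ forces $b_2\neq 0$: otherwise the subspace $\{x_2=0\}$ is $J_2(\omega)$-invariant and contains $b$ together with every iterate $J_2(\omega)^kb$, contradicting $\rank[b,\dots,J_2(\omega)^3b]=4$. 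Throughout I would use that the real $2\times 2$ matrices commuting with $A_0$ are exactly those of the form $aI_2+bA_0$, i.e. the image of $\mathbb{C}\cong\R^2$ acting on itself by multiplication; in particular every $R_\theta$ and every scalar multiple of $I_2$ commutes with $A_0$, so any block transformation built from these preserves both the diagonal blocks $\omega A_0$ and the coupling block $I_2$.

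First I would eliminate $b_1$. Set $y_1=x_1+Mx_2$ and $y_2=x_2$, where $M$ is the unique element of the commutant of $A_0$ solving $Mb_2=-b_1$; such an $M$ exists because $b_2\neq 0$, identifying the commutant with $\mathbb{C}$ and writing $M=-b_1/b_2$. Using $A_0M=MA_0$ one computes $\dot y_1=\omega A_0y_1+y_2-(b_1+Mb_2)\sigma(u)=\omega A_0y_1+y_2$, while $\dot y_2=\omega A_0y_2-b_2\sigma(u)$ is unchanged. Next I would orient $b_2$: choosing $\phi$ with $R_\phi b_2=\Vert b_2\Vert e_2$ and setting $z_i=R_\phi y_i$, the common rotation commutes with $A_0$, so the identity coupling and the diagonal blocks survive and one is left with $\dot z_1=\omega A_0z_1+z_2$ and $\dot z_2=\omega A_0z_2-\Vert b_2\Vert e_2\sigma(u)$.

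It remains to normalise the three scalar constants $\omega$, $\Vert b_2\Vert$, and the amplitude/slope of $\sigma$. I would achieve this with a single state scaling $W_i=\rho_iz_i$ together with the time change $s=\tfrac{\omega}{2\pi}t$. Matching the coupling block forces the two blocks to be scaled equally up to the time factor, namely $\rho_1=\tfrac{\omega}{2\pi}\rho_2$, which turns the diagonal blocks into $2\pi A_0$ and keeps the coupling equal to $W_2$; the scalar $\rho_2$ is then fixed so that the coefficient in front of $e_2\sigma$ becomes $1/\sigma_\infty$. The net effect on the input term is to replace $\sigma$ by $\tilde\sigma(\xi):=\sigma(\alpha\xi)/\sigma_\infty$ with $\alpha=\sigma_\infty/\sigma'(0)$, after the harmless relabelling $\tilde u=u/\alpha$ of the scalar input (harmless since we ultimately seek a feedback $u=K^Tx$, so the constant $\alpha$ is absorbed into $K$). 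One checks directly from Definition~\ref{def:sat} that $\tilde\sigma$ is again a saturation function and that $\tilde\sigma_\infty=\tilde\sigma'(0)=1$, which yields exactly $(CDI)_1$.

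The computations above are routine, and I do not expect an analytic obstacle here; the only point requiring care — the step I would treat as the crux — is bookkeeping rather than analysis. One must ensure that every transformation stays in the commutant of $A_0$, so that the block shape $\bpm 2\pi A_0&I_2\\0&2\pi A_0\epm$ is preserved at each stage, and that the state scaling is applied uniformly to the two $\R^2$-blocks up to the time-rescaling factor, so that the coupling block remains exactly $I_2$ rather than a nonzero multiple of it.
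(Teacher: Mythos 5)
Your proof is correct and follows essentially the same route as the paper: a block upper-triangular change of variables in the commutant of $A_0$ eliminating $b_1$ (your $M=-b_1/b_2$ is precisely the paper's step $(\alpha U_1x_1-x_2,\alpha U_1x_2)$ with $b_2=\alpha U_1b_1$, written in complex notation), followed by a common rotation--scaling sending $b_2$ to $e_2$, the time rescaling $t\mapsto 2\pi t/\omega$ with differently weighted block scalings, and absorption of $\sigma_\infty$ and $\sigma'(0)$ into a renormalised saturation. The only difference is cosmetic: you make explicit the constants that the paper leaves as ``appropriate choices of $\lambda,k_1,k_2$'', and your single shear handles the case $b_1=0$ automatically.
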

\begin{proof}
If $b_1\neq 0$, pick $\alpha>0$ and a rotation $U_1$ so that $b_2=\alpha U_1b_1$.  Perform first the linear 
change of variable given by $(\alpha U_1x_1-x_2,\alpha U_1x_2)$ and then the linear change of variable 
given by $\beta U_2(x_1,x_2)$, with $\beta \alpha U_1U_2b_2=e_2$. One gets that $(\Sigma)$  has been 
brought to the form 
$$
\left\{
 \begin{array}{lll}
 \dot x_1&=& \omega A_0x_1+x_2,\\  
 \dot x_2&=& \omega A_0x_2-e_2\sigma(u).
\end{array}
\right.
$$
Next consider $X_1(t)=\lambda x_1(2\pi t/\omega)$ and $X_2(t)=\lambda x_2(2\pi t/\omega)/\omega$  and $\sigma(k_1u)/k_2$ to conclude for appropriate choices of $\lambda,k_1,k_2>0$.

\end{proof}

One has to determine a stabilizing linear feedback $K\in\R^4$ for 
$(CDI)_1$, i.e., that there exists $K\in\R^4$ such that the closed loop system defined by
\beq\label{eq:CDI-S1}
(S_1)\quad
\left\{
 \begin{array}{lll}
 \dot x_1&=& 2\pi A_0x_1+x_2,\\  
 \dot x_2&=& 2\pi A_0x_2-e_2\sigma(K^Tx),
\end{array}
\right.
\eeq
is GAS with respect to the origin. For that purpose, we imbed $(S_1)$ into a family of dynamical systems $(S_\vp)_{\vp>0}$ defined as follows. For $\vp>0$, the curves $t\mapsto x_\vp(t)=D_\vp x(t/\vp)$, where $t\mapsto x(t)$ is any trajectory of $(S_1)$ and $D_\vp$ has been defined in \eqref{eq:dvp}, are exactly the trajectories of the dynamical system $(S_\vp)$ given by
\beq\label{eq:CDI-Svp}
(S_\vp)\quad
\left\{
 \begin{array}{lll}
 \dot x_1&=& \frac{2\pi A_0}{\vp}x_1+x_2,\\  
 \\
 \dot x_2&=&  \frac{2\pi A_0}{\vp}x_2-e_2\sigma(K_\vp^Tx),
\end{array}
\right.
\quad x=(x_1,x_2)\in\R^4,\quad K_\vp=D_\vp^{-1}K.
\eeq
The following lemma is immediate.
\begin{lemma}\label{le:1-vp}
There exists a stabilizing linear feedback $K_1\in\R^4$ rendering 
$(S_1)$ GAS with respect to the origin if and only if, for every $\vp>0$, there exists a stabilizing linear feedback $K_\vp\in\R^4$ rendering 
$(S_\vp)$ GAS with respect to the origin.
\end{lemma}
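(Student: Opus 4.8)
The plan is to exploit the explicit conjugacy recorded just before the statement: for a fixed $\vp>0$, the correspondence $x(\cdot)\mapsto x_\vp(\cdot)$ given by $x_\vp(t)=D_\vp x(t/\vp)$ is a bijection between the trajectories of $(S_1)$ closed by a feedback $K$ and the trajectories of $(S_\vp)$ closed by the feedback $K_\vp=D_\vp^{-1}K$. First I would note that this correspondence is indeed a bijection of solution sets, its inverse being $y(\cdot)\mapsto D_\vp^{-1}y(\vp\,\cdot)$, so that $K\mapsto D_\vp^{-1}K$ is a linear isomorphism of $\R^4$ onto itself pairing feedbacks of $(S_1)$ with feedbacks of $(S_\vp)$. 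This is exactly the one-line differentiation (using $D_\vp^{-1}K_\vp=K$ and $K^Tx=K_\vp^Tx_\vp$) that produced the system $(S_\vp)$ in the first place, so nothing more needs to be checked at this stage.

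The only genuine content is then to verify that global asymptotic stability of the origin is invariant under this transformation. Since, for each fixed $\vp$, the matrix $D_\vp$ is a fixed invertible linear map fixing the origin and $t\mapsto t/\vp$ is an increasing reparametrization of time, all three ingredients of GAS are preserved. The origin remains an equilibrium because $D_\vp 0=0$. Lyapunov stability is preserved because $D_\vp$ and $D_\vp^{-1}$ are bi-Lipschitz, hence carry neighbourhoods of the origin to neighbourhoods of the origin, while the time change is monotone and does not affect boundedness of trajectories. Global attractivity is preserved because $t\to\infty$ if and only if $t/\vp\to\infty$, and $D_\vp$ is continuous with $D_\vp 0=0$, so $x(t/\vp)\to 0$ if and only if $x_\vp(t)=D_\vp x(t/\vp)\to 0$.

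Combining these facts gives both implications. If $K_1$ stabilises $(S_1)$, then for every $\vp>0$ the feedback $K_\vp:=D_\vp^{-1}K_1$ stabilises $(S_\vp)$, which is the forward direction. Conversely, specialising the hypothesis to $\vp=1$, for which $D_1=\idty$ and $(S_\vp)$ coincides with $(S_1)$, immediately produces a stabilising feedback for $(S_1)$; equivalently, given any $\vp>0$ and any stabilising $K_\vp$ for $(S_\vp)$, the feedback $K:=D_\vp K_\vp$ stabilises $(S_1)$ by reading the conjugacy backwards.

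I do not anticipate any real obstacle here, which is why the lemma is labelled immediate: it is a soft invariance statement. The single point worth keeping in mind is that $\vp$ is held fixed throughout, so no uniformity in $\vp$ is being asserted; the elementary preservation of Lyapunov stability and of attractivity under one linear isomorphism together with one positive time rescaling is all that is required.
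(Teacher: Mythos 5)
Your proposal is correct and matches the paper's reasoning: the paper declares the lemma immediate precisely because the map $x(\cdot)\mapsto D_\vp x(\cdot/\vp)$ is, by construction, a bijection between closed-loop trajectories of $(S_1)$ with feedback $K$ and those of $(S_\vp)$ with feedback $K_\vp=D_\vp^{-1}K$, and GAS is invariant under a fixed linear isomorphism combined with a positive time rescaling. You have simply spelled out the details (bi-Lipschitz preservation of stability and attractivity, the $\vp=1$ specialisation for the converse) that the paper leaves implicit, so there is nothing to correct.
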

The rest of the section is devoted to an argument for
the next proposition.
\begin{proposition}\label{th:GOAL-CI}
There exists $\vp_0>0$ such that, for every $\vp\in (0,\vp_0)$, there exists a stabilizing linear feedback $K_\vp\in\R^4$ rendering 
$(S_\vp)$ GAS with respect to the origin.
\end{proposition}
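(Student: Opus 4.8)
The plan is to eliminate the fast rotation by passing to a co-rotating frame, to identify by averaging the autonomous limit system $(T_0)$ announced in the introduction, and then to deduce GAS of $(S_\vp)$ for small $\vp$ through a Lyapunov perturbation argument. Throughout I would look for a feedback of the special form $K_\vp=(c_1e_2,c_2e_2)$, with $c_1,c_2>0$ to be fixed later (and, notably, independent of $\vp$); the reason for this ansatz becomes transparent only after averaging.

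First I would set $y_i(t)=R_{-2\pi t/\vp}\,x_i(t)$ for $i=1,2$, i.e. factor out the flow $e^{(2\pi A_0/\vp)t}=R_{2\pi t/\vp}$ of the dominant drift. A direct computation gives $\dot y_1=y_2$ and $\dot y_2=-R_{-2\pi t/\vp}e_2\,\sigma(K_\vp^Tx)$, so that in the co-rotating frame $(S_\vp)$ becomes a genuine planar double integrator driven by a feedback which is $\vp$-periodic in $t$. Using the ansatz above together with the identity $e_2^TR_{2\pi t/\vp}y=\langle R_{-2\pi t/\vp}e_2,\,y\rangle$, the scalar argument of $\sigma$ reads $K_\vp^Tx=\langle R_{-2\pi t/\vp}e_2,\,w\rangle$ with $w:=c_1y_1+c_2y_2$, whence
\[
\dot y_2=-R_{-2\pi t/\vp}e_2\;\sigma\!\big(\langle R_{-2\pi t/\vp}e_2,\,w\rangle\big).
\]

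Second, I would average the right-hand side over one fast period. Writing $u=R_{-2\pi t/\vp}e_2$ for the unit vector sweeping $S^1$, the mean of $u\,\sigma(\langle u,w\rangle)$ over $S^1$ is, by rotational equivariance, a vector collinear with $w$ whose length depends only on $\Vert w\Vert$; this defines a \emph{radial saturation} $\sigma_{\mathrm{rad}}(w)=\frac{g(\Vert w\Vert)}{\Vert w\Vert}w$, and one checks from $(s1)$--$(s3)$ and Remark~\ref{rem:sat} that the profile $g$ inherits the qualitative features of a scalar saturation. The averaged limit system is therefore
\[
(T_0)\qquad \dot y_1=y_2,\qquad \dot y_2=-\sigma_{\mathrm{rad}}(c_1y_1+c_2y_2),
\]
a planar double integrator with radially saturated feedback, matching the description in the introduction. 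For suitable $c_1,c_2>0$ I would then prove $(T_0)$ is GAS by exhibiting a strict Lyapunov function $V$ built from the radial potential $\int_0^{\Vert\cdot\Vert}g$, a kinetic term $\tfrac12\Vert y_2\Vert^2$, and a cross term along $w$ (the naive energy $\int_0^{\Vert w\Vert}g+\tfrac12\Vert y_2\Vert^2$ only yields $\dot V\le 0$ with equality on $\{w=0\}$, so the cross term is what makes $V$ genuinely strict and hence robust to perturbation); the rotational invariance of $(T_0)$ reduces the sign analysis of $\dot V$ essentially to a one-dimensional radial computation.

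Finally, I would return to $(S_\vp)$ and use $V$ as a candidate Lyapunov function for the non-averaged, $\vp$-periodic co-rotating system, viewing the latter as an $O(\vp)$ perturbation of $(T_0)$. The core estimate is to bound the discrepancy between $\Delta V$ computed along $(S_\vp)$ and along $(T_0)$ over one fast period and to show it is dominated by the strictly negative averaged decrease of $V$ once $\vp$ is small enough, \emph{uniformly} in the state. I expect this last step to be the main obstacle: the oscillatory remainder must be controlled globally on $\R^4$ rather than only near the origin, and because $\sigma$ saturates, the available decrease of $V$ is only sublinear at large amplitudes, so the perturbation bounds have to be matched carefully to the growth of $V$ across the different regimes $\Vert w\Vert$ small and large. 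Combining GAS of $(S_\vp)$ for small $\vp$ with Lemma~\ref{le:1-vp} then yields Proposition~\ref{th:GOAL-CI}.
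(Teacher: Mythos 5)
Your first three steps coincide with the paper's: the co-rotating frame $y_i=R_{-2\pi t/\vp}x_i$ with $K_\vp$ built from $e_2$ (the paper takes $K_\vp=(e_2,e_2)^T$ and then sets $z=y_1+y_2$, $y=y_2$ to get $(T_\vp)$ in \eqref{eq:CDI-Tvp}), the averaged radial-saturation limit $(T_0)$ (your $g$ is the paper's modified saturation $S$ of Proposition~\ref{prop:mod-sat}, obtained in Lemma~\ref{le:bvp-conv}), and a strict Lyapunov function with a cross term (the paper's $V_0$ in \eqref{eq:V0}, whose term $\int_0^{\Vert z-y\Vert}S$ plays the role of your cross term). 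The genuine gap is in your final step, and it is exactly where the paper's real work lies. Your mechanism --- bound the discrepancy between $\Delta V$ along $(S_\vp)$ and along $(T_0)$ \emph{over one fast period} and dominate it by the averaged decrease, uniformly in the state --- fails quantitatively for large $\Vert y\Vert$. Over one period of length $\vp$, the state $z$ drifts by $O(\vp\Vert y\Vert)$, so after freezing the slow variables the oscillatory remainder in the cross term $2y^T\bigl(f(z)-b_\vp\sigma(b_\vp^Tz)\bigr)$ is of size $\vp\cdot\Vert y\Vert\cdot O(\vp\Vert y\Vert)=O(\vp^2\Vert y\Vert^2)$, whereas the guaranteed averaged decrease per period is only $O(\vp\Vert y\Vert)$, because the feedback saturates and $df$ decays like $S(\Vert\cdot\Vert)/\Vert\cdot\Vert$ at infinity. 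Domination therefore breaks down as soon as $\Vert y\Vert\gtrsim 1/\vp$: no choice of small $\vp$ makes the one-period comparison uniform in the state. (Note also that pointwise the scheme cannot work at all: as the paper observes, if $b_\vp^Tz=0$ and $z=y\neq 0$ then $\dot V_0=S(\Vert z\Vert)\Vert z\Vert>0$, unbounded.) The paper circumvents this by estimating $\Delta V_0$ over windows of length $T(z_0,y_0)\asymp\rho\max(1,\Vert y_0\Vert)$, i.e.\ \emph{growing with the state} (Proposition~\ref{prop:key1}), and the decisive cancellations in Lemma~\ref{le:K} are not crude Lipschitz bookkeeping but sign-definite comparisons exploiting the monotonicity inequality \eqref{eq:gradient-f}, the decomposition of $df$ into the $zz^T$ and $z^\perp(z^\perp)^T$ parts, and the structural bounds $(S2)$--$(S3)$ on $S$, together with a case analysis ($\Vert z-y\Vert\lessgtr C_*\Vert z\Vert$) and the separate angular-speed argument of Lemma~\ref{le:L} for the term $L_\vp$.

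A second, smaller omission: even granting the windowed decrease for $V_0\geq R$, you only obtain entry of trajectories into the sublevel set $\{V_0\leq 2R\}$ (the paper's Lemma~\ref{le:limsup}); since $V_0$ still fails to decrease pointwise along $(T_\vp)$, convergence to the origin inside that set requires a further argument. The paper sharpens the window estimate on the bounded region to $\Delta V_0\leq -C_R\int\bigl[\Vert y\Vert^2+(b_\vp^Tz)^2\bigr]dt$ (Lemma~\ref{le:L2gain}), deduces $b_\vp^Tz=K_\vp^Tx\in L^2$, applies Barbalat's lemma, and rewrites $(S_\vp)$ as $\dot x=A_\vp x+f(t)b$ with $A_\vp=J_2(2\pi/\vp)-bb^T$ Hurwitz to conclude attractivity and local exponential stability. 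Your proposal, as written, contains neither the state-dependent window idea nor this endgame, and the one-period uniform-domination claim it rests on is false at large amplitudes; so the plan correctly identifies the obstacle but does not surmount it.
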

Proposition~\ref{th:GOAL-CI}, together with Lemma~\ref{le:1-vp}, achieves the stabilisation objective for $(CDI)_1$, i.e., Theorem~\ref{th:main} holds true.

\subsection{Limiting behavior for $(T_\vp)$ as $\vp\to 0$.}\label{sse:lim-beh}
Clearly, understanding the asymptotic behaviour of $(S_\vp)$
for any fixed value of $\vp>0$ is as difficult as fixing $\vp=1$. 
The strategy we follow is made of two steps. In the first one, 
we let $\vp$ tend to zero or infinity and 
expect to characterize a limit system which is GAS with 
respect to the origin. Then, in a second step, considering $
(S_\vp)$ (for $\vp$ small or large enough) as a 
perturbation of the limit system, we aim at extending the 
GAS property of the limit system to neighboring $(S_\vp)$'s.

As $\vp$ tends to infinity, it is not difficult to see that a limit 
system exists (by simply cancelling the terms in $2\pi A_0/\vp$ ), 
but the latter ``contains'' a double integrator and hence it is 
unstable with respect to the origin for any choice of linear 
feedback $K$. In that case, we cannot even complete the first 
step of our strategy. As $\vp$ tends to zero, the term 
$2\pi A_0/\vp$ blows up but the flow associated with this linear 
term corresponds to a rotation and thus remains uniformly 
bounded. Relying on a variation of constant formula, one 
obtains 
a family $(T_\vp)_{\vp>0}$ of dynamical systems on $\R^4$ 
which admits a limit $(T_0)$ as $\vp$ tends to zero in a 
sense precised below.

One passes from $(S_\vp)_{\vp>0}$ to $
(T_\vp)_{\vp>0}$ using the time-varying linear change of 
variable $Y_\vp(t)=R_{-2\pi t/\vp}x(t)$. Setting  
\beq\label{eq:bKvp}
b_\vp(t)=R_{-2\pi t/\vp}e_2,
\eeq
and choosing 
\beq\label{eq:bKvp11}
K_\vp=\bpm e_2\\e_2\epm,
\eeq
an easy computation yields that $Y_\vp=(y_1,y_2)$ is a 
trajectory of 
$$
\left\{
 \begin{array}{lll}
 \dot y_1&=&y_2,\\  
 \dot y_2&=&-b_\vp\sigma(b_\vp^T(y_1+y_2)),
\end{array}
\right.
$$
where we have dropped the time dependence in $b_\vp$ for notational simplicity. 
We finally define $z=y_1+y_2$ and $y=y_2$ to get the following one-parameter family $(T_\vp)_{\vp>0}$ of time-varying dynamical systems on $\R^4$ given by 
\beq\label{eq:CDI-Tvp}
(T_\vp)\quad
\left\{
 \begin{array}{lll}
 \dot z&=& y-b_\vp\sigma(b_\vp^Tz),\\  
 \dot y&=& -b_\vp\sigma(b_\vp^Tz).
\end{array}
\right.
\eeq
It is immediate to see that Proposition~\ref{th:GOAL-CI} holds true if, for $\vp>0$ small 
enough, $(T_\vp)$ is GAS with respect to the origin (with the definition of GAS uniformly 
with respect to time in the case of non autonomous ODEs).

We have the following lemma which is is the key step to identify the limit system $(T_0)$. 
\begin{lemma}\label{le:bvp-conv}
Assume that $\sigma$ is a saturation function as defined in Definition~\ref{def:sat}.
Let $S$ be the modified saturation function associated with $\sigma$ as defined in 
Appendix. Then, the family of time-varying vector fields on $\R^2$, 
$(f_\vp(t,\cdot))_{t\geq 0}$, defined by 
\beq\label{eq:vf2-vp}
f_\vp(t,z)=b_\vp\sigma(b_\vp^Tz),\quad (t,z)\in\R_+\times \R^2,
\eeq
converges, as $\vp$ tends to zero, to the vector field $f:\R^2\to\R^2$ given by 
\beq\label{eq:f}
f(z)=\left\{
 \begin{array}{lll}
S(\Vert z\Vert)\frac{z}{\Vert z\Vert}&\hbox{ if }&z\neq 0,\\  
 0&\hbox{ if }&z=0,
\end{array}
\right.
\eeq
for the weak-$\ast$ topology of $L^{\infty}(\R_+,\R^2)$, i.e., for every $z\in\R^2$ and $g\in L^{1}(\R_+,\R^2)$, 
$$
\lim_{\vp\to 0}\int_0^{\infty}f^T_\vp(t,z)g(t)dt=f^T(z)\int_0^{\infty}g(t)dt,
$$ 
and the above convergence is uniform with respect to $z\in\R^2$. 
\end{lemma}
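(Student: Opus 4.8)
The plan is to read the statement as a classical averaging result. Passing to the fast time $s=t/\vp$, one has $b_\vp(t)=R_{-2\pi t/\vp}e_2=R_{-2\pi s}e_2$, so that $f_\vp(t,z)=F_z(t/\vp)$ where
$$F_z(s):=R_{-2\pi s}e_2\,\sigma\big((R_{-2\pi s}e_2)^Tz\big)$$
is continuous, $1$-periodic in $s$, and bounded by $\Vert F_z\Vert_\infty\le\sigma_\infty$ uniformly in $z\in\R^2$. Since a rapidly oscillating periodic function converges weak-$\ast$ in $L^\infty$ to its mean, the whole lemma will follow once I show (i) that the mean $\int_0^1F_z(s)\,ds$ equals $f(z)$, and (ii) the weak-$\ast$ convergence, uniformly in $z$.

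For (i) I would compute the mean directly. With the substitution $\phi=2\pi s$ and writing $z=\Vert z\Vert R_{\theta_z}e_1$, the identity $R_{-\phi}e_2=(\sin\phi,\cos\phi)^T$ gives $(R_{-\phi}e_2)^Tz=\Vert z\Vert\sin(\phi+\theta_z)$; after the further substitution $\beta=\phi+\theta_z$ and factoring out the rotation $R_{\theta_z}$ one obtains
$$\int_0^1F_z(s)\,ds=R_{\theta_z}\,\frac1{2\pi}\int_0^{2\pi}(\sin\beta,\cos\beta)^T\,\sigma\big(\Vert z\Vert\sin\beta\big)\,d\beta.$$
The second component vanishes because $\cos\beta\,\sigma(c\sin\beta)=\tfrac1c\frac{d}{d\beta}\Sigma(c\sin\beta)$ integrates to zero over a period, while the first component is, by the very definition of the modified saturation $S$ in the Appendix, equal to $S(\Vert z\Vert)$. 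Hence the mean is $S(\Vert z\Vert)R_{\theta_z}e_1=S(\Vert z\Vert)z/\Vert z\Vert=f(z)$ for $z\neq0$ (and trivially $0$ at $z=0$ since $\sigma(0)=0$). The oddness of $\sigma$ is precisely what makes the radial component well defined and the tangential one vanish.

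For (ii) I would argue by density. Set $\tilde F_z:=F_z-f(z)$, a $1$-periodic, zero-mean field with $\Vert\tilde F_z\Vert_\infty\le2\sigma_\infty$ uniformly in $z$, so that $\int_0^\infty[f_\vp(t,z)-f(z)]^Tg(t)\,dt=\int_0^\infty\tilde F_z(t/\vp)^Tg(t)\,dt$. For $g=\mathbf 1_{[a,b]}v$ with $v$ a constant vector, the zero-mean periodicity makes the primitive $x\mapsto\int_0^x\tilde F_z(s)^Tv\,ds$ itself $1$-periodic, hence bounded by $2\sigma_\infty\Vert v\Vert$ independently of $z$; a change of variables then bounds the integral by $4\vp\,\sigma_\infty\Vert v\Vert\to0$, uniformly in $z$. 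By linearity this gives the convergence for all step functions, and since $\Vert\tilde F_z(\cdot/\vp)\Vert_\infty\le2\sigma_\infty$ the estimate extends to arbitrary $g\in L^1(\R_+,\R^2)$ by approximating $g$ in $L^1$-norm by step functions, all bounds remaining uniform in $z$.

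The computation in (i) is the only genuinely delicate point: one must get the rotation bookkeeping exactly right so that the averaged field points radially and so that its magnitude matches the Appendix definition of $S$. Once the mean is identified, part (ii) is the standard weak-$\ast$ convergence of fast oscillations, and the uniformity in $z$ comes for free from the $z$-independent bound $\Vert F_z\Vert_\infty\le\sigma_\infty$.
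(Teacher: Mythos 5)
Your proposal is correct and follows essentially the same route as the paper: both arguments reduce the weak-$\ast$ claim to averaging the $1$-periodic fast oscillation $F_z(t/\vp)$, factor out the rotation $R_{\theta_z}$ after the change of variables $\beta=2\pi t/\vp+\theta_z$, identify the period mean as $S(\Vert z\Vert)\,z/\Vert z\Vert$ with the tangential component vanishing, and obtain the uniform-in-$z$ convergence from the $z$-independent bound $\Vert F_z\Vert_\infty\le\sigma_\infty$. The only differences are expository: you make explicit two points the paper leaves implicit, namely the antiderivative $\frac1c\Sigma(c\sin\beta)$ showing the cosine component integrates to zero, and the $L^1$-density extension from indicators of intervals (the paper's $I_\vp$) to general $g\in L^1$ via the bounded periodic primitive.
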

\begin{proof} It is enough to show that for every $0\leq a<c$, one has $\lim_{\vp\to 0}I_\vp=f(z)$, where 
\beq\label{eq:cv-w1}
I_\vp=\frac1{c-a}\int_a^cf_\vp(t,z)dt,
\eeq
and that the convergence is uniform with respect to $z\in\R^2$. For $z=0$, the result is true 
with no limit involved. 

Hence we suppose in the sequel that $z\neq 0$. Since $z=-\Vert z\Vert R_{\theta_z}e_2^{\perp}$, one has that 
$$
b_\vp^Tz=-\Vert z\Vert e_2^TR_{-2\pi t/\vp-\pi/2}R_{\theta_z}R_{\pi/2}e_2=\Vert z\Vert s_{\theta_z+2\pi t/\vp}.
$$
Hence one has that
$$
I_\vp=\frac1{c-a}\int_a^c\sigma(\Vert z\Vert s_{\theta_z+2\pi t/\vp})b_\vp dt.
$$
After performing the change of time $v=\theta_z+2\pi t/\vp$, one gets that $
I_\vp=\frac{1}{c-a}R_{\theta_z}J_\vp$ where 
$$
J_\vp=\frac{\vp}{2\pi }\int_{\theta_z+2\pi a/\vp}^{\theta_z+2\pi c/\vp}
\sigma(\Vert z\Vert s_v)\bpm s_v\\c_v\epm dv.
$$
Set $k=E(\frac{2\pi(c-a)}{\vp})$. Then
$$
J_\vp=O(\vp)+\frac{\vp}{2\pi } \int_{\theta_z+2\pi a/\vp}^{\theta_z+2\pi a/\vp+k}
\sigma(\Vert z\Vert s_v)\bpm s_v\\c_v\epm dv=O(\vp)+k\vp\int_0^{1}\sigma(\Vert z\Vert s_{2\pi v})\bpm s_{2\pi v}\\c_{2\pi v}\epm dv,
$$
where the last equality holds since $v\to 
\sigma(\Vert z\Vert s_{2\pi v})\bpm s_{2\pi v}\\c_{2\pi v}\epm$ is $1$-periodic. Moreover the terms $O(\vp)$ do not depend on $z$. It is 
then immediate to compute that
$$
\int_0^{1}\sigma(\Vert z\Vert s_{2\pi v})\bpm s_{2\pi v}\\c_{2\pi v}\epm dv=S(\Vert z\Vert)\bpm 1\\0\epm.
$$
Since $R_{\theta_z}\bpm 1\\0\epm=\frac{z}{\Vert z\Vert}$, the lemma is proved.

\end{proof}
According to the previous lemma, the one-parameter family of time-varying dynamical systems $(T_\vp)_{\vp>0}$ converges for the weak-$\ast$ topology of $L^{\infty}(\R_+,\R^4)$ to the dynamical system $(T_0)$ defined on $\R^4$ by 
\beq\label{eq:CDI-T0}
(T_0)\quad
\left\{
 \begin{array}{lll}
 \dot z&=& y-f(z),\\
 \dot y&=& -f(z),
 \end{array}
\right.
\eeq
where the vector field $f$ on $\R^2$ has been defined in 
\eqref{eq:f}. 
To study $(T_0)$, we need the following lemma.
\begin{lemma}\label{le:gradient-f}
Let $f:\R^2\to\R^2$ be the vector field defined in \eqref{eq:f}.
Then $f$ is bounded, of class $C^1$ and, for every $(z,y)\in\R^4$, one has
\beq\label{eq:gradient-f}
y^T\Big(f(z+y)-f(z)\Big)\geq 0,
\eeq
with equality if and only if $y=0$.
\end{lemma}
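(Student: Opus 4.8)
The plan is to extract everything we need about $f$ from the explicit angular average defining the modified saturation, namely $S(r)=\int_0^1\sigma(r\,s_{2\pi v})\,s_{2\pi v}\,dv$, which is exactly the identity already used in the proof of Lemma~\ref{le:bvp-conv}. Three properties of $S$ drive the whole argument, all inherited from $\sigma$ through this average. First, $S(0)=0$ and $S(r)>0$ for $r>0$, because the integrand $\sigma(r\,s_{2\pi v})\,s_{2\pi v}$ is non-negative by $(s2)$ (odd $\sigma$ with $\sigma(\xi)\xi>0$) and positive off a null set. Second, $S$ is bounded by $2\sigma_\infty/\pi$, since $\vert\sigma\vert\le\sigma_\infty$ and $\int_0^1\vert s_{2\pi v}\vert\,dv=2/\pi$. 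Third, differentiating under the integral sign (licit as $\sigma$ is Lipschitz) gives $S'(r)=\int_0^1\sigma'(r\,s_{2\pi v})\,s_{2\pi v}^2\,dv$, and I would single out that this is \emph{strictly} positive for every $r\ge0$: for $v$ near $0,\tfrac12,1$ the argument $r\,s_{2\pi v}$ is small, so $\sigma'(r\,s_{2\pi v})\ge\sigma'(0)/2>0$ there by $(c3)$, keeping $S'(r)>0$ no matter how large $r$ is. Thus $S$ is strictly increasing on $\R_+$ even though $\sigma$ itself may saturate. These facts are recorded in the Appendix, but they all follow directly from the displayed formula.

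Boundedness of $f$ is then immediate, since $\Vert f(z)\Vert=\vert S(\Vert z\Vert)\vert\le 2\sigma_\infty/\pi$. For the $C^1$ regularity I would argue away from the origin by composition, as $\Vert z\Vert$, $z/\Vert z\Vert$ and $S$ are all $C^1$ there. At the origin I would use the radial-field formula
\beq
Df(z)=S'(r)\,uu^T+\frac{S(r)}{r}\bigl(I_2-uu^T\bigr),\qquad r=\Vert z\Vert,\ u=\frac{z}{\Vert z\Vert},
\eeq
together with $S(r)/r\to S'(0)$ and $S'(r)\to S'(0)$ as $r\to0$, so that both the radial and tangential parts collapse to $S'(0)I_2$; this matches the direct computation $Df(0)=S'(0)I_2$, yielding continuity of $Df$ at the origin and hence $f\in C^1(\R^2)$.

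For the inequality I would compute directly. Writing $a=\Vert z+y\Vert$, $b=\Vert z\Vert$ and, when both are non-zero, letting $\theta$ be the angle between $z+y$ and $z$, the identities $w^Tf(w)=S(\Vert w\Vert)\Vert w\Vert$ and $w^Tf(z)=\cos\theta\,\Vert w\Vert\,S(\Vert z\Vert)$ give
\beq
y^T\bigl(f(z+y)-f(z)\bigr)=S(a)\,(a-b\cos\theta)+S(b)\,(b-a\cos\theta).
\eeq
Since $\cos\theta\le1$ and $S(a),S(b)\ge0$, replacing $\cos\theta$ by $1$ only decreases the right-hand side, so it is at least $(a-b)\bigl(S(a)-S(b)\bigr)\ge0$, the last step because $S$ is non-decreasing; conceptually this is just the monotonicity of the gradient of the convex radial potential $z\mapsto\int_0^{\Vert z\Vert}S(s)\,ds$. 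The degenerate cases $z=0$ or $z+y=0$ reduce to $S(\Vert y\Vert)\Vert y\Vert\ge0$ and are handled separately. For equality I would trace back the two places where slack was discarded: when $a,b>0$ the strict positivity $S(a),S(b)>0$ forces $\cos\theta=1$, so $z+y$ and $z$ are positively collinear, while $(a-b)\bigl(S(a)-S(b)\bigr)=0$ together with the strict monotonicity of $S$ forces $a=b$; collinearity and equal norms give $z+y=z$, i.e.\ $y=0$. I expect this equality analysis to be the main obstacle, precisely because it cannot be closed with monotonicity of $S$ alone: one genuinely needs $S$ strictly increasing on all of $\R_+$, which is exactly where the angular averaging and property $(c3)$ are indispensable.
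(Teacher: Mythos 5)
Your proof is correct, and for the central inequality it takes a genuinely different route from the paper. The groundwork coincides: boundedness of $f$, the radial-field differential (the paper's \eqref{eq:gradient-f-1}), and continuity of $Df$ at the origin via $S(r)/r\to S'(0)$ and $S'(r)\to S'(0)$, with $S'>0$ and the properties of $S(\xi)/\xi$ supplied by Proposition~\ref{prop:mod-sat} (your direct verification that $S'>0$ via $(c3)$ and the angular averaging is exactly the content of $(S1)$ there). The divergence is in proving \eqref{eq:gradient-f}: the paper never expands $y^T(f(z+y)-f(z))$ in polar data, but writes it as $\int_0^1 y^T df(z+sy)\,y\,ds$ (identity \eqref{eq:S'S}) and concludes from the fact that $df(z)$ is symmetric positive definite at \emph{every} $z$, positive definiteness coming from $S'>0$ and $S(\xi)/\xi>0$ in \eqref{eq:gradient-f-1}. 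Your two-point computation is instead the exact identity $S(a)(a-b\cos\theta)+S(b)(b-a\cos\theta)=(a-b)\bigl(S(a)-S(b)\bigr)+(1-\cos\theta)\bigl(S(a)b+S(b)a\bigr)$, in which both summands are manifestly non-negative, so the inequality needs only $S\geq 0$ non-decreasing, and the equality case follows from $S>0$ on $(0,\infty)$ together with strict monotonicity of $S$ — you correctly identify $S'>0$ as the indispensable ingredient, just as the paper needs it for positive definiteness of $df$. Your route is more elementary (it does not use differentiability of $f$ at all for \eqref{eq:gradient-f}, only for the $C^1$ claim) and generalizes verbatim to $\R^n$; what the paper's Hessian route buys is the explicit radial/tangential quadratic-form decomposition, which is not incidental to this lemma: the identity \eqref{eq:S'S} and the split into an $S'$-weighted radial term and an $S/\Vert\cdot\Vert$-weighted tangential term are reused verbatim to compute $\dot V_0$ in \eqref{eq:dV_0} and again, quantitatively, in the proof of Lemma~\ref{le:K}. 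One small wording nit: in your degenerate cases ($z=0$ or $z+y=0$ with $y\neq 0$) the value is $S(\Vert y\Vert)\Vert y\Vert>0$ strictly, not merely $\geq 0$, and this strictness (which follows from $S(r)>0$ for $r>0$, which you established) is what closes the equality analysis there.
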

\begin{proof}
From Proposition~\ref{prop:mod-sat}, we have that $f$ is bounded and,  since $S$ is of class $C^1$ and $\xi\to S(\xi)/\xi$ is decreasing, $f$ is differentiable everywhere, $C^1$ outside the origin and $df(0)=S'(0)I_2$. Indeed, for $z\neq 0$, one has that
\beq\label{eq:gradient-f-1}
df(z)=S'(\Vert z\Vert)\frac{zz^T}{\Vert z\Vert^2}+\frac{S(\Vert z\Vert)}{\Vert z\Vert}
\Big(I_2-\frac{zz^T}{\Vert z\Vert^2}\Big). 
\eeq
Note that, since $z\in\R^2$, one has that  $I_2-\frac{zz^T}{\Vert z\Vert^2}=\frac{z^{\perp}(z^{\perp})^T}{\Vert z\Vert^2}$. Clearly $df(z)$ is bounded and continuous at $z=0$. Moreover, since both $S'$ and $\xi\mapsto S(\xi)/\xi$  are positive functions, then $df(z)$ is symmetric positive definite for every $z\in\R^2$. 

For every $(z,y)\in\R^4$, one has 
\beq\label{eq:S'S}
y^T\Big(f(z+y)-f(z)\Big)=\int_0^1y^Td f(z+sy)y\,ds,
\eeq
which is clearly non negative, and strictly positive if $y\neq 0$ since $z\mapsto df(z)$ is everywhere positive definite.
\end{proof}
As a consequence of Lemma~\ref{le:gradient-f}, we have the following proposition, which describes the asymptotic behaviour of trajectories of $(T_0)$.
\begin{proposition}\label{prop:asymp-T0}
Trajectories of $(T_0)$ given in \eqref{eq:CDI-T0} are defined for all non negative times.
Moreover, consider the function $V_0:\R^4\to\R_+$ given by
\beq\label{eq:V0}
V_0(z,y)=\Vert y\Vert^2+\int_0^{\Vert z\Vert}S(\xi)d\xi+\int_0^{\Vert z-y\Vert}S(\xi)d\xi.
\eeq
Then $V_0$ is a $C^1$, positive definite and radially unbounded function which is a strict Lyapunov function along trajectories of $(T_0)$. As a consequence, $(T_0)$ is GAS with respect to the origin.
\end{proposition}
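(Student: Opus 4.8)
The plan is to verify that $V_0$ is a genuine \emph{strict} Lyapunov function for $(T_0)$ and then invoke the standard Lyapunov theorem for global asymptotic stability. First I would settle global existence: by Lemma~\ref{le:gradient-f} the vector field $f$ is $C^1$ and bounded, so the right-hand side of $(T_0)$ is locally Lipschitz (giving local existence and uniqueness) and satisfies $\Vert\dot y\Vert\leq\Vert f\Vert_\infty$ together with $\Vert\dot z\Vert\leq\Vert y\Vert+\Vert f\Vert_\infty$; hence $\Vert y\Vert$ grows at most linearly and $\Vert z\Vert$ at most quadratically in $t$, ruling out finite-time blow-up. Thus every trajectory is defined on $[0,\infty)$.

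Next I would record the three qualitative properties of $V_0$. Writing $\Phi(\zeta)=\int_0^{\Vert\zeta\Vert}S(\xi)\,d\xi$, a direct differentiation gives $\nabla\Phi(\zeta)=f(\zeta)$ for $\zeta\neq 0$, and this extends continuously through $\zeta=0$ since $f$ is $C^1$ with $f(0)=0$ (recall $df(0)=S'(0)I_2$ forces $S(0)=0$); hence $V_0(z,y)=\Vert y\Vert^2+\Phi(z)+\Phi(z-y)$ is $C^1$ on $\R^4$. Positive definiteness follows from $S>0$ on $\R_+^\ast$: one has $V_0\geq 0$, and $V_0=0$ forces $y=0$ and $\Vert z\Vert=0$. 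Radial unboundedness follows because $\Phi(z)$ grows linearly in $\Vert z\Vert$ (as $S$ has a positive limit at infinity, so $\int_0^\infty S=\infty$): the lower bound $V_0\geq\Vert y\Vert^2+\Phi(z)$ is coercive in $(z,y)$.

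The heart of the argument is the computation of $\dot V_0$ along $(T_0)$. The key observation is that the combination $w:=z-y$ satisfies $\dot w=\dot z-\dot y=y$, so the two saturated terms cancel and the third summand of $V_0$ is easy to differentiate. Using $\nabla_z V_0=f(z)+f(z-y)$ and $\nabla_y V_0=2y-f(z-y)$ and substituting $\dot z=y-f(z)$, $\dot y=-f(z)$, the cross terms collapse and I expect to obtain
\beq
\dot V_0=-\Vert f(z)\Vert^2-y^T\big(f(z)-f(z-y)\big).
\eeq
Both summands are non-positive: the first trivially, and for the second I would apply Lemma~\ref{le:gradient-f} to the pair $(z-y,\,y)$, namely $y^T\big(f((z-y)+y)-f(z-y)\big)\geq 0$, with equality if and only if $y=0$. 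Since moreover $\Vert f(z)\Vert^2=0$ if and only if $z=0$, the two contributions vanish simultaneously only at $(z,y)=(0,0)$; thus $\dot V_0<0$ off the origin and $V_0$ is a strict Lyapunov function. Combined with the previous paragraph, GAS follows from the standard Lyapunov theorem, so no LaSalle-type invariance argument is needed.

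The main obstacle I anticipate is not the monotonicity inequality---Lemma~\ref{le:gradient-f} is tailored for it---but organizing the derivative computation so that its structure matches the lemma: one must spot the cancellation making $\dot w=y$ and then recombine the expanded terms into the two manifestly signed pieces above. A secondary point requiring care is the $C^1$ regularity of $V_0$ across the singular loci $\{z=0\}$ and $\{z=y\}$, where $\Vert z\Vert$ and $\Vert z-y\Vert$ fail to be differentiable; this is handled by noting that the radial primitive $\Phi$ has the everywhere-continuous gradient $f$, so that $\dot V_0=\nabla V_0\cdot(\dot z,\dot y)$ is legitimate at every point of $\R^4$.
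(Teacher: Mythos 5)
Your proposal is correct and takes essentially the same route as the paper: the same cancellation $\dot z-\dot y=y$ leads to the identical identity $\dot V_0=-S(\Vert z\Vert)^2-y^T\big(f(z)-f(z-y)\big)$ (your $\Vert f(z)\Vert^2$ equals $S(\Vert z\Vert)^2$ since $f(z)=S(\Vert z\Vert)z/\Vert z\Vert$), and strict negativity off the origin is deduced from Lemma~\ref{le:gradient-f} applied to the pair $(z-y,y)$, exactly as the paper does. The only difference is cosmetic: you spell out the verifications the paper dismisses as immediate (global existence via boundedness of $f$, $C^1$ regularity of $V_0$ through the loci $\{z=0\}$ and $\{z=y\}$ via $\nabla\Phi=f$, positive definiteness and coercivity), while the paper additionally rewrites the monotonicity term as $\int_0^1 y^T df(g(s))y\,ds$, which is just the content of the lemma's proof restated.
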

\begin{proof} The vector field on $\R^4$ defining $(T_0)$ is $C^1$, thanks to Lemma~\ref{le:gradient-f}, and, since its growth at infinity is linear, trajectories of $(T_0)$ are defined for all non negative times. Properties of $V_0$ are immediate and we next check that $V_0$ is a strict Lyapunov function for $(T_0)$.
Indeed, if we use $\dot V_0$ to denote the time derivative of $V_0$  along non trivial trajectories of $(T_0)$, one gets that
\begin{eqnarray}
\dot V_0&=&-S(\Vert z\Vert)^2-y^T\Big(f(z)-f(z-y)\Big)=-S(\Vert z\Vert)^2-\int_0^1y^Td f(g(s))y\,ds\nonumber\\
&=&-S(\Vert y\Vert)^2-\int_0^1\Big[S'(\Vert g(s)\Vert)\frac{(y^Tg(s))^2}{\Vert g(s)\Vert^2}+\frac{S(\Vert g(s)\Vert)}{\Vert g(s)\Vert}\frac{(y^Tg(s)^{\perp})^2}{\Vert g(s)\Vert^2}\Big]ds,
\label{eq:dV_0}
\end{eqnarray}
where $g(s)=z-(1-s)y$ for $s\in[0,1]$. One gets the conclusion by using Lemma~\ref{le:gradient-f}.

\end{proof}
\begin{remark}
Note that $(T_0)$ is locally exponentially stable at the origin since the linearized system associated with $(T_0)$ at the origin is defined by the Hurwitz matrix
$$
\bpm -S'(0)I_2&I_2\\-S'(0)I_2&0\epm=\bpm -S'(0)&1\\-S'(0)&0\epm\otimes I_2.
$$

\end{remark}
\begin{remark} Recall that the double integrator (DI) is the linear control system defined on $\R^2$ by
$\dot x=J_2x+e_2u$. For any feedback $u=-\sigma(k^Tx)$ where $k\in\R^2$ has positive coordinates and $\sigma$ is a saturation function, the closed loop system  $\dot x=J_2x-e_2\sigma(k^Tx)$ is GAS with respect to the origin. After a linear change of variable and time, such a system can be brought to the form corresponding to $(T_0)$ namely
\beq\label{eq:DI}
(DI)\quad
\left\{
 \begin{array}{lll}
 \dot z&=& y-\sigma(z),\\  
 \dot y&=& -\sigma(z),
\end{array}
\right.
\eeq
with $(z,y)\in\R^2$. It has been proved in \cite{These-YY} that the radially unbounded positive definite function $V:
\R^2\to\R_+$ given by
$$
V(z,y)=y^2+\int_0^{z}\sigma(\xi)d\xi+\int_0^{z-y}\sigma(\xi)d\xi,
$$
is a strict Lyapunov function for $(DI)$.
It is immediate to see that $V_0$ is a simple adaptation of $V$ to $(T_0)$.
\end{remark}
\begin{remark}
Let $F_2:\R^4\to\R^4$ be the vector field on $\R^4$ defining $
(T_0)$. It is rather immediate to see that, for every $n\geq 1$, 
one can define a vector field $F_n$ on $\R^{2n}$ where 
$F_n(z,y)$ is defined exactly as $F_2(z,y)$, now with $z$ and $y$ 
vectors in $\R^n$. (For $n=1$, $z/\Vert z\Vert$ must be 
understood as the sign of $z\in\R$.) Then the conclusions of 
Proposition~\ref{prop:asymp-T0} extend verbatim to $F_n$ 
with the same Lyapunov function $V_0$ now defined on $
\R^{2n}$.
\end{remark}
\subsection{Study of $(T_\vp)$ for $\vp$ small enough.}\label{ssection-vp-small}
By characterizing $(T_0)$, we have achieved the first step of the strategy devised to prove Proposition~\ref{th:GOAL-CI}.
We next turn to the second step and for that purpose we will analyse the variations of $V_0$ along trajectories of $(T_\vp)$ for $\vp$ small enough.

The time derivative $\dot V_0$ of $V_0$  along non trivial trajectories of $(T_\vp)$ is given by 
\begin{eqnarray}
\dot V_0&=&-S(\Vert z\Vert)\Big(b_\vp^T\frac{z}{\Vert z\Vert}\Big)
\sigma(b_\vp^Tz)
\label{eq:t1}\\
&-&y^T\Big(f(z)-f(z-y)\Big)\label{eq:t2}\\
&+&2y^T\Big(f(z)-b_\vp^T\sigma(b_\vp^Tz)\Big)\label{eq:t3}.
\end{eqnarray}
Clearly the two first terms \eqref{eq:t1} and \eqref{eq:t2} are non positive and one must handle the effect of 
the third one \eqref{eq:t3}. As a matter of fact, if $b_\vp^Tz=0$ and $z=y\neq 0$, then $\dot V_0=S(\Vert 
z\Vert)\Vert z\Vert$, which is positive and unbounded over $\R^2$. Then $V_0$ cannot be a Lyapunov 
function for $(T_\vp)$ for $\vp>0$ since clearly $\dot V_0$ (the time derivative of $V_0$  along non trivial 
trajectories of $(T_\vp)$) can clearly be positive. To circumvent this problem, we will evaluate variations of 
$V_0$ on appropriate time intervals when $\Vert (z,y)\Vert$ is large.

\begin{remark}
One could have also written $\dot V_0$ as
$$
\dot V_0=-S(\Vert z\Vert)^2-y^T\Big(f(z)-f(z-y)\Big)+(2y-f(z))^T\Big(f(z)-b_\vp^T\sigma(b_\vp^Tz)\Big),
$$
with a more handleable first term since it is $\vp$-free. However, it introduces an extra quantity in the third term, which turns out to be not so easy to deal with.
\end{remark}

We aim at establishing the following key technical proposition.
\begin{proposition}\label{prop:key1}
There exists $\vp_0>0$, $R,C_1>0$ and $\rho\in (0,1)$, such that, for every $\vp\in (0,\vp_0)$, $(z_0,y_0)\in\R^4$ with $V(z_0,y_0)\geq R$, there exists $T(z_0,y_0)$
such that 
\beq\label{eq:T-CDI}
\rho\max(1,\Vert y_0\Vert)\leq T(z_0,y_0)
\leq 2\rho\max(1,\Vert y_0\Vert),
\eeq
for which
\beq\label{eq:key2}
\Delta V_0\Big\vert_0^{T(z_0,y_0)}\leq -C_1T(z_0,y_0),
\eeq
along every trajectory of $(T_\vp)$ starting at $(z_0,y_0)$.
\end{proposition}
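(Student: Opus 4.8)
The plan is to integrate the time derivative $\dot V_0$ of the Lyapunov function \eqref{eq:V0} along a trajectory of $(T_\vp)$ over a carefully chosen window $[0,T]$ and to show that the dissipation produced by the two non-positive terms \eqref{eq:t1}--\eqref{eq:t2} beats the oscillatory term \eqref{eq:t3}. Writing $M=\max(1,\Vert y_0\Vert)$, I would look for $T\in[\rho M,2\rho M]$ and, in order to exploit the exact period-averaging below, take $T$ to be an integer multiple of the period $\vp$. Since $\Vert\dot y\Vert\leq\sigma_\infty$ and $\Vert\dot z\Vert\leq\Vert y\Vert+\sigma_\infty$, for $\rho$ small the norm $\Vert y(t)\Vert$ stays comparable to $\Vert y_0\Vert$ on $[0,T]$, while $z$ may drift at rate $\sim\Vert y_0\Vert$. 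By Lemma~\ref{le:gradient-f} and the sign of $\sigma$, the terms \eqref{eq:t1} and \eqref{eq:t2} are $\leq 0$, so the whole problem reduces to showing that $\int_0^T\eqref{eq:t3}\,dt$ is negligible against this dissipation.

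For the oscillatory term I would use that, for fixed $z$, the map $t\mapsto f_\vp(t,z)=b_\vp\sigma(b_\vp^Tz)$ is $\vp$-periodic with \emph{exact} mean $f(z)$ over one period (Lemma~\ref{le:bvp-conv} with $c-a=\vp$). Hence the primitive
$$H_\vp(t,z)=\int_0^t\big(f_\vp(s,z)-f(z)\big)\,ds$$
is $\vp$-periodic in $t$, of size $O(\vp)$ uniformly in $z$, and so is $\partial_zH_\vp$. Using $f_\vp(t,z(t))-f(z(t))=\tfrac{d}{dt}H_\vp(t,z(t))-\partial_zH_\vp(t,z(t))\dot z$ and integrating by parts turns $\int_0^T\eqref{eq:t3}\,dt$ into a boundary term $[2y^TH_\vp]_0^T=O(\vp M)$, a term $\int_0^T2\dot y^TH_\vp\,dt=O(\vp T)$, and the remainder $-\int_0^T2y^T\partial_zH_\vp\,\dot z\,dt$. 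Substituting $\dot z=y-f_\vp$, the only genuinely delicate piece is the quadratic-in-$y$ self-interaction $\int_0^T2y^T\partial_zH_\vp\,y\,dt$; the pieces involving $f_\vp$ are $O(\vp M^2)$.

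The dissipation I would extract regime by regime. When $\Vert y_0\Vert$ is bounded (so $M=O(1)$ and $T\sim\rho$), $z$ moves by only $O(\rho)$, so $\Vert z(t)\Vert$ stays large once $V_0(z_0,y_0)\geq R$ with $R$ large; then \eqref{eq:t1}, whose period-average equals $-S(\Vert z\Vert)^2$, already yields $\int_0^T\eqref{eq:t1}\,dt\leq-cT$, and every error term above is $O(\rho\vp)$. When $\Vert y_0\Vert$ is large, $z$ drifts quickly through any neighbourhood of the origin, so $\Vert z(t)\Vert\geq\delta$ off a short subinterval; the dissipation is then of order $\rho M^2$ (equivalently $\Vert y\Vert^2$ decreases at rate $\sim\Vert y_0\Vert$, as for the limit system $(T_0)$), which is $\geq C_1T$ since $M\geq 1$. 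Handling the brief origin-crossing, so that \eqref{eq:t1} is effective on a definite fraction of $[0,T]$ where \eqref{eq:t2} is weak, is a routine but necessary bookkeeping step.

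The main obstacle is to control the quadratic self-interaction $\int_0^T2y^T\partial_zH_\vp\,y\,dt$ \emph{uniformly in} $M$ in the large-$\Vert y_0\Vert$ regime: a crude bound only gives $O(\rho M^3\vp)$, and iterating the integration by parts merely replaces this by $\rho M^2(M\vp)^k$, which resists removal because the fast oscillation couples to the drift $\dot z\sim\Vert y\Vert$. The resolution is not to compare this error with $T$ but to absorb it into \eqref{eq:t2}. Writing $\partial_zH_\vp(t,z)=\int_0^t\big(\sigma'(b_\vp^Tz)b_\vp b_\vp^T-df(z)\big)ds$ as a difference of symmetric positive semidefinite matrices with equal period-average gives the pointwise bound $\vert y^T\partial_zH_\vp(t,z)y\vert\leq\vp\,y^Tdf(z)y$; where $\Vert z\Vert\gtrsim\Vert y\Vert$, comparing $y^Tdf(z)y$ with the integrand $\int_0^1y^Tdf(z-(1-s)y)y\,ds$ of \eqref{eq:t2} (they agree at $s=1$, and $df$ varies slowly on the relevant scale) bounds the error by $\vp$ times \eqref{eq:t2}, which is absorbed for $\vp$ small. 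In the complementary range $\Vert z\Vert\ll\Vert y\Vert$, where \eqref{eq:t2} is only of order $\Vert y\Vert$, one instead exploits that $\partial_zH_\vp$ depends on $z$ solely through $\sigma'(b_\vp^Tz)$, hence is nearly $z$-independent for small $\Vert z\Vert$, so the dangerous drift-coupling $\partial_z^2H_\vp\,\dot z$ is itself small and a second application of the periodic-primitive argument controls the self-interaction. I would finally fix the constants in the order $\rho$ (small, to freeze $y$ and the direction of $z$), then $\vp_0$ (small, for the averaging and the absorption), then $R$ (large). I expect this absorption step, combined with the uniform lower bound on $\Vert z(t)\Vert$, to be the hardest and most computational part of the argument.
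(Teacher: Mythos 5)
Your periodic-primitive reduction of \eqref{eq:t3} is sound, and the sandwich $-\vp\,df(z)\leq\partial_zH_\vp(t,z)\leq\vp\,df(z)$ in the semidefinite order (since $\int_0^t\sigma'(b_\vp^Tz)b_\vp b_\vp^T ds$ increases from $0$ to exactly $\vp\,df(z)$ over one period) is a correct and genuinely different device from the paper's; your absorption of $\vp\,y^Tdf(z)y$ into \eqref{eq:t2} when $\Vert z\Vert\gtrsim\Vert y\Vert$ is essentially Case $1$ of the paper's Lemma~\ref{le:K} in different clothing. But there are two genuine gaps. First, the dissipation budget in the large-$\Vert y_0\Vert$ regime is wrong. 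Take $z_0=M^2e_1$, $y_0=Me_1$ with $M$ large: then $z$ drifts \emph{away} from the origin, no crossing occurs within the window $T\leq2\rho M$, and since $z$ and $z-y$ stay on the same distant ray, $-y^T\bigl(f(z)-f(z-y)\bigr)=-M\bigl(S(\Vert z\Vert)-S(\Vert z-y\Vert)\bigr)=O\bigl(M^2S'(M^2/2)\bigr)$, so the total \eqref{eq:t2}-dissipation over $[0,T]$ is negligible --- nothing like $\rho M^2$. (Even in a crossing, $V_0$ does not drop by $\rho M^2$: $\Vert y\Vert^2$ decreases at rate $\sim M$, but the potential terms of $V_0$ grow at the same rate; the $M$-sized pieces cancel inside \eqref{eq:t3}, whose mean vanishes.) In such configurations the required $-C_1T$ can only come from \eqref{eq:t1}, and you have justified its averaging only when $\Vert y_0\Vert=O(1)$, where $z$ is frozen over each period. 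For large $M$, $z$ moves by $\vp M\gg1$ per period, and what must be controlled is the \emph{phase}: a priori $\theta_z$ could rotate at speed comparable to $2\pi/\vp$ and lock $b_\vp^Tz$ near zero. This is exactly the content of the paper's Lemma~\ref{le:L}, which splits according to whether $\Vert y\Vert\leq\pi\Vert z\Vert/\vp$ throughout (so $\vert\dot\theta_z\vert\leq4\pi/3\vp<2\pi/\vp$, the phase $2\pi t/\vp+\theta_z$ is strictly monotone, and a change of time yields the $-cT$ average) or not (short excursion near $z=0$, then back to the first case on half the window). Nothing in your proposal plays this role.

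Second, your mechanism for the self-interaction in the range $\Vert z\Vert\ll\Vert y\Vert$ does not work as stated. The claimed near-$z$-independence of $\partial_zH_\vp$ rests on continuity of $\sigma'$ at $0$, hence holds only for $\Vert z\Vert=O(1)$ (and with no quantitative modulus, $\sigma'$ being merely monotone and continuous), while the regime to cover extends to $\Vert z\Vert\lesssim\delta\Vert y\Vert$, unbounded, where $\sigma'(b_\vp^Tz)$ swings between $\approx\sigma'(0)$ and $\approx\sigma'(\Vert z\Vert)$ within every period; moreover a second periodic-primitive step cannot remove the nonzero per-period mean of $\partial_zH_\vp$, which is $O(\vp)$ and by itself reinstates the $O(\vp\Vert y\Vert^2)$ rate you are trying to kill. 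The paper avoids the problem structurally: the one-sided monotonicity replacements \eqref{eq:b1-j} and \eqref{eq:j+1} (using that $\sigma$ is increasing, together with \eqref{eq:gradient-f}) reduce each period to $M_{\vp,j}=y_j^T\bigl(f(z_j(1))+f(z_j(1)-y_j)-2f(z_j)\bigr)$, built from the \emph{bounded} map $f$, so $\vert M_{\vp,j}\vert\leq4S_\infty\Vert y_j\Vert$ --- linear, not quadratic, in $\Vert y\Vert$; since the bad set where comparability fails is an interval of length $O(1/C_*)$ (as $z$ drifts at speed $\approx\Vert y\Vert$), its contribution is $O(\Vert y_0\Vert/C_*)=O\bigl(T/(\rho C_*)\bigr)$ and is killed by taking $C_*$ large. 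Your integration by parts trades away precisely this boundedness: $y^T\partial_zH_\vp\,y$ carries no a priori cap linear in $\Vert y\Vert$. (A finer direct estimate along the nearly straight bad stretch --- there $y^Tz^\perp$ is nearly conserved and $\int_0^\infty S'(r)\,dr=S_\infty<\infty$ --- would give $\int_{\mathrm{bad}}\vp\,y^Tdf(z)y\,dt=O(\vp\Vert y_0\Vert)$ and could rescue your route, but no such argument appears in your proposal.)
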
 
\begin{proof}
The several constants will be fixed along the argument but typically $\vp_0$ and $\rho$
will be small compared to one while $R$ will be large compared to one. Let us stress that $\rho$, $R$ and 
$\vp_0$ will be eventually modified in the argument (typically by decreasing $\rho$ and $\vp_0$ and 
increasing $R$) but these choices remain ``universal'', i.e., only depending on $\vp_0$ and thus independent 
of $\vp<\vp_0$. We will also use the symbol $C_R$ to denote positive constants that only depend on $R$ 
and $\sigma$.

We fix $(z_0,y_0)\in\R^4$ with $V(z_0,y_0)\geq R$ and simply use $T$ to denote $T(z_0,y_0)$. 
Note that $\dot V_0\geq -1-3\sqrt{V_0}$. In particular, as long 
as $V_0\geq 1$, one has that $ (\sqrt{V_0}\dot) \geq -2$ and 
hence $\sqrt{V_0}\geq \sqrt{V_0(0)}(1-4\rho\pi)$ on $[0,T\pi]$. 
In particular, $\sqrt{V_0}\geq R^{1/2}/2$ on $[0,T\pi]$. 

There are two key quantities to estimate, namely
\beq\label{eq:Lvp}
L_\vp=-\int_0^{T}S(\Vert z\Vert)\Big(b_\vp^T\frac{z}{\Vert z\Vert}\Big)\sigma(b_\vp^Tz)dt,
\eeq
and 
\beq\label{eq:Kvp}
K_\vp=K^1_\vp+K^2_\vp,
\eeq
where
\beq\label{eq:Kvp-1}
K^1_\vp=-\int_0^{T}y^T\Big(f(z)-f(z-y)\Big)dt,\quad 
K^2_\vp=2\int_0^{T}y^T\Big(f(z)-b_\vp^T\sigma(b_\vp^Tz)\Big)dt.
\eeq
Assume that
\beq\label{eq:Lvp-1}
L_\vp\leq -3C_1T,
\eeq
and 
\beq\label{eq:Kvp-1}
K_\vp\leq (C_1+C_2\vp) T,
\eeq
for some positive constants $C_1,C_2$ independent of $\vp$ small enough. Clearly the above two inequalities yield \eqref{eq:key2}.

\end{proof}
We are now left to establish \eqref{eq:Lvp-1} and \eqref{eq:Kvp-1}.
This is the purpose of the next two lemmas.
\begin{lemma}\label{le:L}
With the above notations, there exists a positive constant such that 
\eqref{eq:Lvp-1} holds true.
\end{lemma}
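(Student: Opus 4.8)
The quantity to bound below is $-L_\vp=\int_0^T g_\vp\,dt$, where, writing out \eqref{eq:Lvp},
\[
g_\vp(t)=\frac{S(\Vert z\Vert)}{\Vert z\Vert}\,(b_\vp^Tz)\,\sigma(b_\vp^Tz).
\]
Because $S\geq 0$ and $\xi\sigma(\xi)>0$ for $\xi\neq 0$ by item $(s2)$ of Definition~\ref{def:sat}, the integrand $g_\vp$ is pointwise non-negative, so $L_\vp\leq 0$ comes for free; the whole content of the lemma is the \emph{quantitative} bound \eqref{eq:Lvp-1}. The plan is therefore to produce a positive constant that lower bounds $g_\vp$ on a subset of $[0,T]$ whose measure is at least a fixed fraction of $T$.

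Writing $z=\Vert z\Vert(\cos\theta_z,\sin\theta_z)$ and letting $\psi(t)$ denote the angle between $b_\vp(t)$ and $z(t)$, so that $b_\vp^Tz=\Vert z\Vert\cos\psi$, one has $g_\vp=S(\Vert z\Vert)\,\cos\psi\,\sigma(\Vert z\Vert\cos\psi)$. Along \eqref{eq:CDI-Tvp} the direction $\theta_z$ evolves with rate $|\dot\theta_z|\leq(\Vert y\Vert+1)/\Vert z\Vert$, while $b_\vp$ rotates with angular speed $2\pi/\vp\to\infty$; hence $\psi$ turns monotonically and essentially uniformly as soon as $\Vert z\Vert\gtrsim\vp\Vert y\Vert$. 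On the region where this holds, the times at which $\vert\cos\psi\vert\geq\tfrac12$ occupy a fixed fraction $c_0>0$ of each rotation period. This is exactly the averaging that underlies Lemma~\ref{le:bvp-conv}, through the identity $\int_0^1 s_{2\pi v}\sigma(r s_{2\pi v})\,dv=S(r)$; here, however, only the one-sided estimate is needed. On those times $g_\vp\geq\tfrac12 S(\Vert z\Vert)\,\sigma(\Vert z\Vert/2)$, which is bounded below by a positive constant as soon as $\Vert z\Vert$ is bounded below, since $S$ and $\sigma$ are positive and non decreasing on $\R_+^\ast$.

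It remains to show that $\Vert z(t)\Vert$ stays above a fixed positive threshold on a definite fraction of $[0,T]$. I use that along \eqref{eq:CDI-Tvp} one has $\Vert\dot y\Vert\leq 1$ and $\Vert\dot z-y\Vert\leq 1$, and I split according to a threshold $M_0=M_0(\rho)$. If $\Vert y_0\Vert\leq M_0$, then $T\leq 2\rho M_0$ is small and $\Vert y\Vert$ stays bounded; since $V_0\geq R/4$ on the relevant interval (as already recorded) while $\Vert y\Vert^2\leq M_0^2\ll R$, the terms $\int_0^{\Vert z\Vert}S$ of $V_0$ force $\Vert z_0\Vert\gtrsim R$, and the bounded speed of $z$ keeps $\Vert z\Vert$ large on all of $[0,T]$. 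If $\Vert y_0\Vert>M_0$, then $\Vert y(t)\Vert\geq(1-2\rho)\Vert y_0\Vert$ remains comparable to $\Vert y_0\Vert$, and $z(t)=z_0+t y_0+E(t)$ with $\Vert E(t)\Vert$ small compared with $T\Vert y_0\Vert$; the convexity of $t\mapsto\Vert z_0+t y_0\Vert^2$ then confines both the set $\{\Vert z\Vert\leq 1\}$ and the averaging-bad set $\{\Vert z\Vert\lesssim\vp\Vert y_0\Vert\}$ to subintervals of length $O(1/\Vert y_0\Vert)$ and $O(\vp)$ respectively, each negligible against $T\sim\rho\Vert y_0\Vert$. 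Combining with the previous paragraph gives $\int_0^T g_\vp\,dt\geq c\,T$ for some $c>0$, and taking $C_1=c/3$ yields \eqref{eq:Lvp-1}.

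The delicate point, and the one I expect to be the main obstacle, is averaging the oscillatory factor along a genuine trajectory rather than at frozen $z$: when $\Vert y_0\Vert$ is large, $\Vert z\Vert$ can change by more than its own size over a single period $\vp$ of $b_\vp$, so the clean averaging of Lemma~\ref{le:bvp-conv} is unavailable near the closest approach of $z$ to the origin. The resolution is to isolate the region $\Vert z\Vert\gtrsim\vp\Vert y_0\Vert$, where $\psi$ turns uniformly and the one-sided lower bound holds, and to simply discard the complementary region of length $O(\vp)$ using $g_\vp\geq 0$. Checking that this discarded set is negligible against $T$ \emph{uniformly} in $\vp\in(0,\vp_0)$ and in $(z_0,y_0)$ is precisely where the ordering of the choices (first $\rho$, then $R$, then $\vp_0$) recorded at the start of the proof of Proposition~\ref{prop:key1} must be exploited.
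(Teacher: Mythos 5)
Your proposal is correct and follows essentially the same route as the paper: your pointwise threshold $\Vert z\Vert\gtrsim\vp\Vert y\Vert$ is exactly the paper's case split $(L1)$/$(L2)$ (with the failure window confined to a subinterval of length $O(\vp)$ by the same ballistic estimate $z(t)\approx z(\tilde t)+(t-\tilde t)y(\tilde t)$), your small/large $\Vert y_0\Vert$ dichotomy and the confinement of $\{\Vert z(t)\Vert<1\}$ to an interval of length $O(1/\Vert y_0\Vert)$ match the paper's treatment of $E_z$, and your ``fixed fraction of each period where $\vert\cos\psi\vert\geq 1/2$'' bound is a repackaging of the paper's change of time $\tau=2\pi t/\vp+\theta_z$ (with $2\pi/3\vp\leq\dot\tau\leq 5\pi/3\vp$) followed by the $\pi$-periodicity of $\tau\mapsto s_\tau\sigma(s_\tau)$. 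The dependence of the constants you flag at the end ($\rho$ small, then $R$ large, then $\vp_0$ small, uniformly in $\vp$ and $(z_0,y_0)$) is precisely how the paper organizes its choices, so no genuine gap remains.
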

\begin{proof}
We distinguish two cases.
\begin{description}
\item[$(L1)$] For every $t\in[0,T]$, one has $\Vert y(t)\Vert\leq \frac{\pi \Vert z(t)\Vert}{\vp}$;
\item[$(L2)$] there exists $\bar{t}\in[0,T]$ such that $\Vert y(\bar{t})\Vert>\frac{\pi\Vert z(\bar{t})\Vert}{\vp}$.
\end{description}
Assume that $(L1)$ holds true. Then $z(t)\neq 0$ for every $t\in[0,T]$ and $\theta_z(t)$ is well defined and absolutely continuous. Moreover
$$
\dot\theta_z=\frac{(z^\perp)}{\Vert z\Vert}\frac{d}{dt}{\Big(\frac{z}{\Vert z\Vert}\Big)}=\frac{(z^\perp)^T\dot z}{\Vert z\Vert^2}.
$$
Taking into account the estimate in $(L1)$, one gets that 
$\vert\dot\theta_z\vert \leq 4\pi/3\vp$ on $[0,T]$. 

In the case where $\Vert y_0\Vert\leq R^{1/2}/3$, then $\Vert y\Vert \leq R^{1/2}/2$ and $\Vert z\Vert\geq R/2>1$ on $[0,T]$ for $R$ universal constant large enough.
Assume now that $\Vert y_0\Vert>R^{1/2}/3$. It is immediate to see that 
\beq\label{eq:est-y1}
(1-2\rho)\Vert y_0\Vert \leq \Vert y(t)\Vert \leq (1+2\rho)\Vert y_0\Vert,\quad t\in [0,T].
\eeq
 On the other hand,
let 
\beq\label{eq:Ez1}
E_z:=\{t\in[0,T] \mid \Vert z(t)\Vert< 1\}.
\eeq
If $E_z$ is not empty, let $\tilde{t}\in E_z$. From the dynamics 
and \eqref{eq:est-y1}, one gets that 
$$
z(t)=z(\tilde{t})+(t-\tilde{t})\Big(y(\tilde{t})+O(1)\Big),\ t\in[0,T]
$$
where $\Vert O(1)\Vert$ can be chosen smaller than one, thanks 
to Proposition~\ref{prop:NF}. This implies that
\beq\label{eq:est-y2}
\Vert z(t)\Vert\geq \Big(\vert t-\tilde{t}\vert-\vp\Big)(\Vert 
y(\tilde{t})\Vert-1).
\eeq
From that, it is easy to deduce that $E_z$ is contained in an subinterval of $[0,T\pi]$ of length smaller than 
$2/\Vert y_0\Vert$ and hence there exists a subinterval $I_L$ 
of $[0,T]$ of length at least $T/2$ such that 
for $t\in I_L$, 
\begin{itemize}
\item $\Vert z(t)\Vert\geq 1$,
\item $\vert\dot\theta_z\vert \leq 4\pi/3\vp$.
\end{itemize}
Then one gets, 
\beq\label{eq:Lest1}
L_\vp\leq -\int_{I_L}S(\Vert z\Vert)\Big(b_\vp^T\frac{z}{\Vert z\Vert}\Big)
\sigma(b_\vp^Tz)dt\leq-S(1)\int_{I_L}s_{2\pi t/\vp+\theta_z}
\sigma(s_{2\pi t/\vp+\theta_z})dt,
\eeq
since both $S$ and $\sigma$ are increasing. We now perform the change of time $\tau(t)=2\pi t/\vp+\theta_z$. Since 
$2\pi/3\vp\leq \dot\tau\leq 5\pi/3\vp$ on $I_L$, $t\mapsto\tau(t)
$ realises an increasing bijection between $I_L$ and an 
interval $\tilde{I}_L$ with 
$2\pi\vert I_l\vert/3\vp\leq \vert \tilde{I}_l\vert\leq 4\pi\vert I_l\vert\vp$. 
One deduces from \eqref{eq:Lest1} the following
\beq\label{eq:Lest2}
L_\vp\leq -\frac{3\vp S(1)}{10\pi}\int_{\tilde{I}_L}s_\tau\sigma(s_\tau)d\tau.
\eeq
Since $\tau\mapsto s_\tau\sigma(s_\tau)$ is $\pi$-periodic, it is easy to see that $\int_{\tilde{I}_L}s_\tau\sigma(s_\tau)d\tau\geq S(1)T\pi/6\vp+O(1)$, which implies that $L_\vp\leq-4C_1T+T\vp O(1)$ for some universal constant $C_1$. Then \eqref{eq:Lvp-1} holds if $(L1)$ holds true.

We now assume that $(L2)$ holds true. In particular we have 
that $\Vert y(\bar{t})\Vert\geq R^{1/2}/2$ and both 
\eqref{eq:est-y1} and \eqref{eq:est-y2} hold true.
It is immediate to see that, outside an interval $I_{bad}\subset 
[0,T]$ of length at most $4\pi\vp$ and containing $\bar{t}$, one 
has $\Vert y(t)\Vert\leq \frac{\pi \Vert z(t)\Vert}{\vp}$. We can 
therefore select a subinterval of $[0,T]$ of length at least 
$T/2$ on which the previous inequality holds true on it. We 
are back to $(L1)$ and that concludes the proof of 
\eqref{eq:Lvp-1}.
\end{proof}

\begin{lemma}\label{le:K}
With the above notations, \eqref{eq:Kvp-1} holds true.
\end{lemma}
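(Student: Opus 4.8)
The plan is to treat $K^1_\vp$ and $K^2_\vp$ separately, the former being harmless and the latter requiring the full strength of the averaging in Lemma~\ref{le:bvp-conv}. For $K^1_\vp$, substituting $z\mapsto z-y$ in \eqref{eq:gradient-f} gives $y^T\big(f(z)-f(z-y)\big)\ge 0$ pointwise, so that $K^1_\vp\le 0$ along every trajectory; it will serve below as a reservoir of negativity. Hence the whole difficulty lies in the oscillatory term $K^2_\vp=2\int_0^{T}y^T\big(f(z)-f_\vp(t,z)\big)dt$, where $f_\vp(t,z)=b_\vp\sigma(b_\vp^Tz)$ is the field of \eqref{eq:vf2-vp}.

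The idea is that for frozen $z$ the map $t\mapsto f_\vp(t,z)$ is $\vp$-periodic with mean $f(z)$, so its primitive $G_\vp(t,z)=\int_0^{t}\big(f_\vp(s,z)-f(z)\big)ds$ is $\vp$-periodic in $t$ and satisfies $\Vert G_\vp(t,z)\Vert=O(\vp)$ uniformly in $z$, the integrand being bounded and of zero mean over a period. Writing $f_\vp(t,z(t))-f(z(t))=\tfrac{d}{dt}\big[G_\vp(t,z(t))\big]-D_zG_\vp(t,z(t))\,\dot z$ and integrating by parts, $K^2_\vp$ splits into a boundary term $-2[\,y^TG_\vp\,]_0^{T}$, a term $2\int_0^{T}\dot y^TG_\vp\,dt$, and a cross term $2\int_0^{T}y^T D_zG_\vp\,\dot z\,dt$. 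The first two are $O(\vp T)$: indeed \eqref{eq:est-y1} gives $\Vert y(t)\Vert\le(1+2\rho)\Vert y_0\Vert$ while \eqref{eq:T-CDI} gives $\Vert y_0\Vert\le T/\rho$, so the boundary term is $O(\vp\Vert y_0\Vert)=O(\vp T)$, and since $\dot y=-f_\vp$ is bounded and $G_\vp=O(\vp)$, so is the second term. Substituting $\dot z=y-f_\vp$ in the cross term, the part $-2\int_0^{T}y^TD_zG_\vp f_\vp\,dt$ carries a single power of $\Vert y\Vert$ and is again $O(\vp T)$, so everything reduces to the quadratic piece $2\int_0^{T}y^TD_zG_\vp\,y\,dt$.

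This quadratic piece is the main obstacle. The gain one can hope for is the decay of the derivatives as $\Vert z\Vert\to\infty$: since $\sigma$ is bounded and $\sigma'$ is non increasing on $\R_+$ one has $\int_0^{2\pi}\sigma'(\Vert z\Vert s_\phi)\,d\phi=O(1/\Vert z\Vert)$, whence the period-integral of $D_zf_\vp(\cdot,z)=\sigma'(b_\vp^Tz)\,b_\vp b_\vp^T$ is $O(\vp/\Vert z\Vert)$; likewise, by \eqref{eq:gradient-f-1} and the monotonicity of $\xi\mapsto S(\xi)/\xi$, the matrix $df(z)$ is $O(1/\Vert z\Vert)$. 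Hence $\Vert D_zG_\vp(t,z)\Vert=O\big(\vp/\max(1,\Vert z\Vert)\big)$, and the naive modulus bound gives $O(\vp)\int_0^{T}\Vert y\Vert^2/\max(1,\Vert z\Vert)\,dt$. Using \eqref{eq:est-y2} and the measure estimate on $E_z$ from the proof of Lemma~\ref{le:L}, one finds that this is $O(\vp\Vert y_0\Vert\log\Vert y_0\Vert)$, which just fails to be $O(\vp T)$ because of a spurious logarithm coming from the region where $\Vert z\Vert$ is large.

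The decisive point, and where I expect the real work to lie, is that the modulus bound discards the direction of $D_zG_\vp$, whose period-averaged (secular) part is precisely $df(z)$. Consequently $y^TD_zG_\vp\,y$ is, up to a zero-mean oscillatory remainder, controlled by the very quadratic form $y^Tdf(z)y$ that assembles $-K^1_\vp=\int_0^{T}\int_0^1 y^Tdf(z-sy)\,y\,ds\,dt$ through \eqref{eq:S'S}. I would therefore bound the quadratic piece not by its modulus but by $C\vp(-K^1_\vp)+O(\vp T)$, exploiting that the dangerous alignment $z\parallel y$ makes both integrands small through the same factor $S'(\Vert z\Vert)$. This yields $K_\vp=K^1_\vp+K^2_\vp\le(1-C\vp)K^1_\vp+O(\vp T)\le O(\vp T)$ for $\vp<\vp_0$, the favourable sign of $K^1_\vp$ absorbing the dangerous term so that the logarithm never materialises and $K_\vp\le(C_1+C_2\vp)T$ holds with the stated constants. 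Making the matching of $D_zG_\vp$ with $df$ rigorous — estimating the oscillatory remainder and the near-alignment region — is the genuinely delicate part of the argument.
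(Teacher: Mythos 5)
Your first half is sound and takes a genuinely different, arguably cleaner route than the paper: where the paper discretizes $[0,T]$ into $\vp$-periods, freezes $y_j=y(\vp j)$ on each period, and uses the monotonicity \eqref{eq:gradient-f} together with the increasing character of $\sigma$ to replace the oscillating quantities by their frozen/endpoint versions up to $O(\vp)$ errors per period (the claims \eqref{eq:b1-j} and \eqref{eq:j+1}), you integrate by parts against the $\vp$-periodic zero-mean primitive $G_\vp$; the two procedures do the same work, your boundary, $\dot y$ and $f_\vp$ terms corresponding to the paper's $\vp T O(1)$ error terms in \eqref{eq:errorKj1}--\eqref{eq:errorKj2}. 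You also correctly diagnose that a pure modulus bound on the residual quadratic piece produces a spurious logarithm and must fail, and that the only way out is to play the secular part of $D_zG_\vp$, namely $df(z)$, against the negative reservoir $K^1_\vp$. This is indeed the paper's mechanism: everything is reduced to the discrete sum $M_\vp$ of \eqref{eq:Mvp}, in which the $O(\vp)$-size positive increments (coming from $z_j(1)-z_j(0)=\vp y_j$) are dominated by the $O(1)$-size negative increment of $f$ along the segment $x_j(v)=z_j-vy_j$.

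The gap is that this absorption step --- which you yourself flag as ``the genuinely delicate part'' --- is asserted, not carried out, and it is precisely the core of the lemma. The pointwise comparison $y^T df(z)\,y\leq C\int_0^1 y^T df(z-sy)\,y\,ds$ that your bound ``quadratic piece $\leq C\vp(-K^1_\vp)+O(\vp T)$'' implicitly requires fails in the near-alignment regime $\Vert y\Vert\gg\Vert z\Vert$: comparing the orthogonal contributions via \eqref{eq:gradient-f-1}, the ratio $\frac{S(\Vert z\Vert)/\Vert z\Vert^3}{S(\Vert z\Vert+\Vert y\Vert)/(\Vert z\Vert+\Vert y\Vert)^3}$ is unbounded there. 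Making this region harmless is what consumes the paper's proof: the invariance \eqref{eq:perp1} of $y_j^T(\cdot)^{\perp}$ along the three segments, the cubic comparison $(S2)$ of Proposition~\ref{prop:mod-sat} ($S'(M\xi)\geq C_2S'(\xi)/M^3$), the monotonicity of $\xi\mapsto S(\xi)/\xi$, the dichotomy on $\Vert z-y\Vert$ versus $C_*\Vert z\Vert$, and a measure estimate confining the bad times to an interval $I_{bad}$ of length $O(1/C_*)$. Note moreover that the bad interval contributes an $\vp$-free term $\frac{32}{\rho C_*}T$, made small only by taking $C_*$ large; so your hoped-for conclusion $K_\vp\leq O(\vp T)$ is stronger than what this mechanism can deliver (and stronger than \eqref{eq:Kvp-1} requires, which tolerates the $C_1T$ term). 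As written, the proposal is an insightful and largely correct plan whose decisive estimate --- the content of the paper's Cases 1 and 2 --- is missing, so the lemma is not yet proved.
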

\begin{proof}
In the sequel, we will use the notation $O(\cdot)$ only when the involved bounds do not depend on $\vp$.
We first perform the change of time $s=t/\vp$ and rewrite $K^1_\vp$, $K^2_\vp$ defined in \eqref{eq:Kvp-1} as 
\beq\label{eq:K1vp}
K^1_\vp=
-\vp\int_0^{T/\vp}y(\vp s)^T\Big(f(z(\vp s))-f(z(\vp s)-y(\vp s))\Big)ds
\eeq
and
\beq\label{eq:K2vp}
K^2_\vp=2\vp\int_0^{T/\vp}y(\vp s)^T\Big(f(z(\vp s))-b_\vp^T\frac{z(\vp s)}{\Vert z(\vp s)\Vert}\Big)ds.
\eeq
We start by several trivial remarks. With our choice of $T$ and since $\dot y=O(1)$, then
 clearly $\Vert y\Vert=O(\max(1,\Vert y_0\Vert))$.
We can therefore always assume that $T/\vp$ is an integer since otherwise the error made in \eqref{eq:K2vp} is $\vp O(\max(1,\Vert y_0\Vert))=\vp O(T)$ and hence negligible if we establish \eqref{eq:Lvp-1}. 

We can then set $T/\vp=k$. We now decompose the integral terms in $K^1_\vp$ and $K^2_\vp$ according to
$$
\int_0^{k}\cdots=\sum_{j=0}^{k-1}\int_{j}^{j+1}\cdots,
$$
and then perform the change of times $s=j+v$ in each interval $[j,(j+1)]$. We deduce from \eqref{eq:K1vp} and \eqref{eq:K2vp} that, for $0\leq j\leq k-1$, one has 
\beq\label{eq:Kvp-dec}
K^1_\vp=-\vp\sum_{j=0}^{k-1}K^1_{\vp,j},\quad
K^2_\vp=2\vp\sum_{j=0}^{k-1}K^2_{\vp,j},
\eeq
where
\beq\label{eq:Kvp-2}
K^1_{\vp,j}=\int_0^{1}y^T\Big(f(z)-f(z-y)\Big)dv,\quad
K^2_{\vp,j}=\int_0^{1}y^T\Big(f(z)-b_1\sigma(b_1^Tz)\Big) dv,
\eeq
where the argument of both $z,y$ is equal to $j\pi\vp+\pi\vp v$.

We need the following notations,
\beq\label{eq:notations0}
z_j=z(\vp j),\ y_j=y(\vp j),\ z_j(v)=z_j+\vp vy_j,\ 0\leq j\leq k-1,\ v\in[0,1].
\eeq
We also have the following estimates, easily deduced from \eqref{eq:notations0},
\beq\label{eq:notations1}
y(\vp j+\vp v)=y_j+\vp O(1)v,\
z(\vp j+\vp v)=z_j(v)+\vp O(1)v,
\eeq
where $\Vert O(1)\Vert\leq 1$.

We next consider, for $0\leq j\leq k-1$, the quantities 
$\widetilde{K}^1_{\vp}$ and $\widetilde{K}^2_{\vp}$ obtained 
as $K^1_{\vp}$ and $K^2_{\vp}$ in \eqref{eq:Kvp-dec} but, instead of $K^1_{\vp,j}$ and $K^2_{\vp,j}$, we use the
integrals 
\beq\label{eq:Kvp-2}
\widetilde{K}^1_{\vp,j}= \int_0^{1}y_j^T\Big(f(z)-f(z-y)\Big)dv,\quad
\widetilde{K}^2_{\vp,j}= \int_0^{1}y_j^T\Big(f(z)-b_{1}\sigma(b_{1}^Tz)\Big) dv,
\eeq
where still the argument of $z$ is equal to $j\pi\vp+\pi v\vp$. From 
\eqref{eq:notations1} and the fact that $f$ and $\sigma$ are bounded, one gets that, for $0\leq j\leq k-1$,
\beq\label{eq:errorKj1}
K^1_{\vp,j}=\widetilde{K}^1_{\vp,j}+O(1)\vp,\quad
K^2_{\vp,j}=\widetilde{K}^2_{\vp,j}+O(1)\vp.
\eeq
One deduces that, for $i=1,2$, 
\beq\label{eq:errorKj2}
K^i_{\vp}=\widetilde{K}^i_{\vp}+\vp^2kO(1)=\widetilde{K}^i_{\vp}+\vp T\pi O(1).
\eeq
Setting $\widetilde{K}_{\vp}=\widetilde{K}^1_{\vp}+\widetilde{K}^2_{\vp}$, one deduces from the previous equation that the argument amounts to prove the estimate \eqref{eq:Kvp-1} for $\widetilde{K}_{\vp}$.

We claim that, for $0\leq j\leq k-1$, one has that
\beq\label{eq:b1-j}
-\int_0^{1}y_j^Tb_{1}\Big(\sigma(b_{1}^Tz)-\sigma(b_{1}^Tz_j)\Big)dv
\leq O(1)\vp.
\eeq
Observe first that one gets from \eqref{eq:notations1}
\beq\label{eq:obs0}
b_{1}^Tz=b_{1}^Tz_j+\vp v\Big(b_{1}^Ty_j+O(1)\Big).
\eeq
To get the claim, one can see that
$$
y_j^Tb_{1}\Big(\sigma(b_{1}^Tz)-\sigma(b_{1}^Tz_j)\geq 0,
$$
as soon as
$b_{1}^Tz_j(b_{1}^Tz-b_{1}^Tz_j)>0$ since $\sigma$ is increasing.
By \eqref{eq:obs0}, the previous inequality does not hold true only if
$\vert y_j^Tb_{2\pi v}\vert =O(1)$, in which case,
$$
\vert y_j^Tb_{1}\Big(\sigma(b_{1}^Tz)-\sigma(b_{1}^Tz_j)\Big)\vert=\vp O(1).
$$
This concludes the argument of the claim \eqref{eq:b1-j}.

Noticing that
$$
\int_0^{1}y_j^Tb_{1}\sigma(b_{1}^Tz_j)dv=y_j^Tf(z_j),
$$
and using \eqref{eq:b1-j}, one deduces that in the estimate $\widetilde{K}^2_{\vp,j}$, one can replace $\sigma(b_{1}^Tz)$ by $f(z_j)$. We are therefore left to show that the following quantity 
\beq\label{eq:Mvp-0}
\vp\sum_{j=0}^{k-1}\int_0^{1}y_j^T\Big(f(z)+f(z-y)-2f(z_j)\Big)dv
\eeq
satisfies the estimate \eqref{eq:Kvp-1}.

Notice that, for $0\leq j\leq k-1$,
$$
f(z)+f(z-y)-2f(z_j)=f(z)-f(z_j)+f(z-y)-f(z_j)=\vp \Big(y_j+O(1)\Big).
$$
One deduces that if $\Vert y_0\Vert\leq R^{1/2}$, then $\Vert y_j\Vert=O(1)$ and $\widetilde{K}^i_{\vp,j}\leq \vp O(1)$ for $i=1,2$ and $0\leq j\leq k-1$, which yields the desired estimate for $K_\vp=\vp TO(R)$. 

We can hence assume that  $\Vert y_0\Vert\geq R^{1/2}$ and then, 
 $\Vert y_0\Vert(1-\rho)\leq \Vert y_j\Vert\leq \Vert y_0\Vert(1+\rho)$ for $0\leq j\leq k-1$.

Similarly to \eqref{eq:b1-j}, we claim that, for $0\leq j\leq k-1$,
one has 
\beq\label{eq:j+1}
\int_0^{1}y_j^T\Big(f(z)+f(z-y)-f(z_j(1))-f(z_j(1)-y_j)\Big)dv\leq O(1)\vp.
\eeq
Indeed, we get from \eqref{eq:notations1} that
$$
z=z_j(1)-\vp \Big((1-v)y_j+vO(1)\Big),\quad
z-y=z_j(1)-y_j-\vp\Big((1-v)y_j+vO(1)\Big).
$$
For $0\leq j\leq k-1$ and, as long as $(1-v)\Vert y_j\Vert>vO(1)$,
one deduces from \eqref{eq:gradient-f} that 
$$
y_j^T\Big(f(z)-f(z_j(1))\Big)\leq 0,\
y_j^T\Big(f(z-y)-f(z_j(1)-y_j)\Big)\leq 0.
$$
The inequality $(1-v)\Vert y_j\Vert\leq vO(1)$ occurs for $v$ close to $1$ and on a subinterval of length $O(1)/\Vert y_j\Vert$.
Using on that subinterval that $f$ is globally Lipschitz, one derives 
\eqref{eq:j+1}.

From \eqref{eq:Mvp-0} and \eqref{eq:j+1}, the argument of Lemma~\ref{le:K} reduces to prove that the quantity $M_\vp$ defined by 
\beq\label{eq:Mvp}
M_\vp=\vp\sum_{j=0}^{k-1}M_{\vp,j},\quad
M_{\vp,j}=y_j^T\Big(f(z_j(1))+f(z_j(1)-y_j)-2f(z_j)\Big),\ 0\leq j\leq k-1,
\eeq
satisfies the estimate \eqref{eq:Kvp-1}. 

For $0\leq j\leq k-1$, set $x_j(v)=z_j-vy_j$ for $v\in[0,1]$. Notice that 
$$
z_j=z_j(0)=x_j(0),\  z_j-y_j=z_j(0)-y_j=x_j(1),
$$
and then one can rewrites \eqref{eq:Mvp} as 
\beq\label{eq:Mj00}
M_{\vp,j}=y_j^T\Big(f(z_j(1))-f(z_j(0))+f(z_j(1)-y_j)-f(z_j(0)-y_j)+f(x_j(1)-f(x_j(0))\Big).
\eeq
By using \eqref{eq:S'S} in the previous equality, one has for every $0\leq j\leq k-1$ that
\beq\label{eq:Mj2}
M_{\vp,j}=\int_0^1\Big(M_{\vp,j}^1(v)+M_{\vp,j}^2(v)\Big)dv,
\eeq
where 
\begin{eqnarray}
M_{\vp,j}^1(v)&=&\vp\left[S'(\Vert z_j(v)\Vert)\frac{(y_j^Tz_j(v))^2}{\Vert z_j(v)\Vert^2}
+
S'(\Vert z_j(v)-y_j\Vert)\frac{(y_j^T(z_j(v)-y_j)^2}{\Vert z_j(v)-y_j\Vert^2}\right]\nonumber\\
&-&
S'(\Vert x_j(v)\Vert)\frac{(y_j^Tx_j(v))^2}{\Vert x_j(v)\Vert^2}\label{eq:Mj1}
\end{eqnarray}
and
\begin{eqnarray}
M_{\vp,j}^2(v)&=&\vp\left[\frac{S(\Vert z_j(v)\Vert)}{\Vert z_j(v)\Vert}\frac{(y_j^Tz_j(v)^{\perp})^2}{\Vert z_j(v)\Vert^2}+
\frac{S(\Vert z_j(v)-y_j\Vert)}{\Vert z_j(v)-y_j\Vert}\frac{(y_j^T(z_j(v)-y_j)^{\perp})^2}{\Vert z_j(v)-y_j\Vert^2}\right]\nonumber\\
&-&\frac{S(\Vert x_j(v)\Vert)}{\Vert x_j(v)\Vert}\frac{(y_j^Tx_j(v)^{\perp})^2}{\Vert x_j(v)\Vert^2}.\label{eq:Mj2}
\end{eqnarray}
Moreover note that, for every $0\leq j\leq k-1$ and $v\in[0,1]$, one has
\beq\label{eq:perp1}
y_j^Tz_j(v)^{\perp}=y_j^T(z_j(v)-y_j)^{\perp}=y_j^Tx_j(v)^{\perp}=y_j^Tz_j.
\eeq
To obtain the required estimate, we subdivide the discussion into two cases and consider a constant $C_*$ large with respect to one, which will be fixed later.

\underline{Case $1$.} For every $t\in [0,T]$, one has that $\Vert z-y\Vert\leq C_*\Vert z\Vert/2$. We will prove that $M_{\vp,j}^1(v)+M_{\vp,j}^2(v)<0$ for every $0\leq j\leq k-1$ and $v\in[0,1]$.

As a consequence of the case assumption, one gets, for every $0\leq j\leq k-1$ and $v\in[0,1]$ that
\beq\label{eq:xyz}
\Vert x_j(v)\Vert\leq C_*\Vert z_j(v)\Vert,\ \Vert x_j(v)\Vert\leq C_*\Vert z_j(v)-y_j\Vert.
\eeq
Using Item  $(S2)$ in Proposition~\ref{prop:mod-sat}, one has, for every $0\leq j\leq k-1$ and $v\in[0,1]$, that
\beq\label{eq:S/s}
\frac{S(\Vert z_j(v)\Vert)}{\Vert z_j(v)\Vert}\leq C_*\frac{S(\Vert x_j(v)\Vert)}{\Vert x_j(v)\Vert},\quad
\frac{S(\Vert z_j(v)-y_j\Vert)}{\Vert z_j(v)-y_j\Vert}\leq C_*\frac{S(\Vert x_j(v)\Vert)}{\Vert x_j(v)\Vert}.
\eeq
By taking into account \eqref{eq:perp1}, one has that 
$$
\frac{(y_j^TZ)^2}{\Vert y_j\Vert^2{\Vert Z\Vert^2}}\leq C_*^2\frac{(y_j^Tx_j(v)^{\perp})^2}{\Vert y_j\Vert^2{\Vert x_j(v)\Vert^2}},
$$
where 
$$
Z\in\{z_j(v)^{\perp},(z_j(v)-y_j)^{\perp}\}.
$$
Then, one deduces from the previous inequalities and \eqref{eq:S/s} that, for every $0\leq j\leq k-1$ and $v\in[0,1]$, one has that
\beq\label{eq:estM2}
M_{\vp,j}^2(v)\leq (\vp C_*^3-1)\frac{S(\Vert x_j(v)\Vert)}{\Vert x_j(v)\Vert}\frac{(y_j^Tx_j(v)^{\perp})^2}{\Vert x_j(v)\Vert^2}\leq 0,
\eeq
where the last inequality is obtained for $\vp$ small enough.

To handle $M_{\vp,j}^1(v)$, first notice that, for every $0\leq j\leq k-1$ and $v\in[0,1]$, one can deduce from the case assumption and Item  $(S3)$ in Proposition~\ref{prop:mod-sat} that
\beq\label{eq:S'1}
S'(\Vert z_j(v)\Vert)\leq \frac{C_*^3}{C_0}S'(\Vert x_j(v)\Vert),\quad
S'(\Vert z_j(v)-y_j\Vert)\leq \frac{C_*^3}{C_0}S'(\Vert x_j(v)\Vert).
\eeq
In the case where 
\beq\label{eq:cos-jv1}
\frac{(y_j^Tx_j(v))^2}{\Vert y_j\Vert^2{\Vert x_j(v)\Vert^2}}\geq 1/\sqrt{2},
\eeq
one deduces that 
\beq\label{eq:estM1-2}
M_{\vp,j}^1(v)\leq \Big(\frac{4\vp C_*^3}{C_0}-1\Big)\frac{S(\Vert x_j(v)\Vert)}{\Vert x_j(v)\Vert}\frac{(y_j^Tx_j(v)^{\perp})^2}{\Vert x_j(v)\Vert^2}\leq 0,
\eeq
where the last inequality is obtained for $\vp$ small enough.
One finally gets from \eqref{eq:estM2} and \eqref{eq:estM1-2} that 
$M_{\vp,j}^1(v)+M_{\vp,j}^2(v)\leq 0$. If \eqref{eq:cos-jv1} does not hold then 
\beq\label{eq:cos-jv2}
\frac{(y_j^Tx_j(v)^{\perp})^2}{\Vert y_j\Vert^2{\Vert x_j(v)\Vert^2}}\geq 1/\sqrt{2}.
\eeq
In that case, 
$$
M_{\vp,j}^1(v)\leq \vp\frac{2\vp C_*^3}{C_0}\frac{S(\Vert x_j(v)\Vert)}{\Vert x_j(v)\Vert}\frac{(y_j^Tx_j(v)^{\perp})^2}{\Vert x_j(v)\Vert^2}
\leq \frac{4\vp C_*^3}{C_0}\frac{S(\Vert x_j(v)\Vert)}{\Vert x_j(v)\Vert}\frac{(y_j^Tx_j(v)^{\perp})^2}{\Vert x_j(v)\Vert^2}.
$$
Adding the above inequality with \eqref{eq:estM2} yields that
$$
M_{\vp,j}^1(v)+M_{\vp,j}^2(v)\leq \Big(\vp C_*^3(2/C_0+1)-1\Big)
\frac{S(\Vert x_j(v)\Vert)}{\Vert x_j(v)\Vert}\frac{(y_j^Tx_j(v)^{\perp})^2}{\Vert x_j(v)\Vert^2}\leq 0,
$$
where the last inequality is obtained for $\vp$ small enough.
The argument for \underline{Case $1$}-Lemma~\ref{le:K} is complete.

\underline{Case $2$.} There exists $\bar{t}\in [0,T]$ such that $\Vert z(\bar{t})-y(\bar{t})\Vert\leq C_*\Vert z(\bar{t})\Vert/2$. One deduces at once that 
$$
\Vert z(\bar{t})\Vert\leq \frac1{\frac{C_*}2-1}\Vert y(\bar{t})\Vert.
$$
For $C_*$ universal constant large enough with respect to 
one, we have that $\Vert y(\bar{t})\Vert\geq R^{1/2}/2$ and 
we can easily rewrite \eqref{eq:est-y1} 
as
\beq\label{eq:est-y3}
(1-4\rho)\Vert y(\bar{t})\Vert \leq \Vert y(t)\Vert \leq (1+4\rho)\Vert y(\bar{t})\Vert,\quad t\in [0,T].
\eeq
By computations similar to those leading to \eqref{eq:est-y2}, one 
gets that there exists a subinterval $I_{bad}$ of $[0,T]$ of length at most $4/C_*$ such that $\Vert z(t)-y(t)\Vert\leq C_*\Vert z(t)\Vert$ for $t\in [0,T]\setminus I_{bad}$. We can therefore subdivide $[0,T]$ in at most three disjoint subintervals, $I_1,I_2$ and $I_{bad}$ such that, if one writes $M_\vp=M_{\vp,1}+M_{\vp,bad}+M_{\vp,2}$ according to the subdivision $[0,T]=I_1\cup I_{bad}\cup I_2$, then 
both $M_{\vp,1}$ and $M_{\vp,2}$ are negative since we can apply to each of them \underline{Case $1$} and one has the direct estimate
$$
M_{\vp,bad}\leq 4|I_{bad}|\Vert y(\bar{t})\Vert\leq 
\frac{16}{C_*}\Vert y_0\Vert\leq
\frac{32}{\rho C_*}T.
$$
By choosing $C_*$ large enough with respect to $\rho$ and $C_1$, one finally obtains \eqref{eq:Kvp-1}.

\end{proof}
\subsection{Proof of Proposition~\ref{th:GOAL-CI}}
This will be obtained in three steps, with the help of Proposition~\ref{prop:key1}. The first step is an easy consequence of Proposition~\ref{prop:key1}.
\begin{lemma}\label{le:limsup}
Consider the constants $\vp_0$ and $R$ defined in Proposition~\ref{prop:key1}. Then, for every $\vp\in (0,\vp_0)$ and every $(z_0,y_0)\in\R^4$, there exists a time 
$T_1(z_0,y_0)$ such that 
\beq\label{eq:key3}
V_0(z(t),y(t))\leq 2R,\quad t\geq T_1(z_0,y_0),
\eeq
where $(z,y)$ denotes the trajectory of $(T_\vp)$ starting at $(z_0,y_0)$.
\end{lemma}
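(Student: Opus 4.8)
The plan is to couple the per-window decay furnished by Proposition~\ref{prop:key1} with an a priori bound on how fast $V_0$ can possibly grow, first driving $V_0$ below $R$ and then trapping it below $2R$. Throughout, write $V(t):=V_0(z(t),y(t))$ along the fixed trajectory of $(T_\vp)$. The first ingredient is a one-sided growth estimate: in the decomposition \eqref{eq:t1}--\eqref{eq:t3} of $\dot V_0$, the terms \eqref{eq:t1} and \eqref{eq:t2} are nonpositive, while \eqref{eq:t3} is bounded in modulus by $2\Vert y\Vert$ times a constant, since $f$ and $\sigma$ are bounded (Proposition~\ref{prop:mod-sat}). As $\Vert y\Vert^2\le V_0$, this gives $\dot V\le C_2\sqrt{V}$ for a universal $C_2$, hence $\frac{d}{dt}\sqrt{V}\le C_2/2$ and, over any window $[t_0,t_0+T]$, $\sqrt{V(t)}\le \sqrt{V(t_0)}+(C_2/2)\,T$. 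When $V(t_0)\ge 1$ one has $\max(1,\Vert y(t_0)\Vert)\le\sqrt{V(t_0)}$, so the bound \eqref{eq:T-CDI} forces $T\le 2\rho\sqrt{V(t_0)}$ and therefore $V(t)\le (1+C_2\rho)^2\,V(t_0)$ on the whole window. I would then shrink the universal constant $\rho$ once more so that $(1+C_2\rho)^2\le 3/2$; this calibration is the quantitative core of the argument.

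To reach $\{V<R\}$ in finite time I iterate Proposition~\ref{prop:key1}. Set $t_0=0$, and as long as $V(t_n)\ge R$ let $T_n:=T(z(t_n),y(t_n))$ and $t_{n+1}:=t_n+T_n$; then \eqref{eq:key2} together with $T_n\ge\rho$ (from \eqref{eq:T-CDI}) yields $V(t_{n+1})\le V(t_n)-C_1 T_n\le V(t_n)-C_1\rho$. Since $V\ge 0$, a strict decrease by the fixed amount $C_1\rho$ can occur at most $V(0)/(C_1\rho)$ times, so there is a first index $N$ with $V(t_N)<R$; I set $T_1:=t_N$.

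Finally I would establish $V(t)<2R$ for all $t\ge T_1$ by contradiction, which is where the two ingredients meet. If this fails, let $t^{\ast}$ be the first time $\ge T_1$ with $V(t^{\ast})=2R$ and $t^{\ast\ast}$ the last time before $t^{\ast}$ with $V(t^{\ast\ast})=R$; both exist by continuity and $V(T_1)<R$, and $V>R$ on $(t^{\ast\ast},t^{\ast})$. Applying Proposition~\ref{prop:key1} at $t^{\ast\ast}$, where $V=R\ge 1$, produces a window $[t^{\ast\ast},t^{\ast\ast}+T]$ on which the excursion bound gives $V\le (3/2)R<2R$, while its right endpoint satisfies $V(t^{\ast\ast}+T)\le R-C_1 T<R$. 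The first fact forces $t^{\ast\ast}+T<t^{\ast}$, and then $V(t^{\ast\ast}+T)<R$ contradicts $V>R$ on $(t^{\ast\ast},t^{\ast})$. Hence no such $t^{\ast}$ exists and the claim \eqref{eq:key3} holds.

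I expect the main obstacle to be precisely the calibration in the invariance step: $\rho$ must be chosen small relative to the growth constant $C_2$ so that a single decay window started at level $R$ cannot overshoot past $2R$ before $V$ has genuinely decreased. A secondary point to verify is that Proposition~\ref{prop:key1} is invoked at the positive initial time $t^{\ast\ast}$ rather than at $t=0$; this is legitimate because its estimates depend only on the oscillation of $b_\vp$ and are uniform with respect to the initial time.
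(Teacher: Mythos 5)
Your proof is correct and follows essentially the same route as the paper: iterate Proposition~\ref{prop:key1} (using $T\geq\rho$) to force $V_0$ below $R$ in finite time, then trap $V_0$ below $2R$ via a crossing-time contradiction based on the one-sided growth bound $\dot V_0\leq C_2\sqrt{V_0}$, which is exactly the paper's argument with crossing levels $3R/2$ and $2R$. Your explicit calibration $(1+C_2\rho)^2\leq 3/2$ (a further shrinking of $\rho$, permitted by the paper's stated convention that $\rho$, $R$, $\vp_0$ may be recalibrated) and your remark on invoking Proposition~\ref{prop:key1} at positive initial times are both legitimate and, if anything, make explicit two points the paper passes over quickly.
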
 
\begin{proof} First, notice that the inequality 
$V_0(z,y)\geq M$ for $M$ large implies that either $\Vert y\Vert\geq \sqrt{M/2}$ or $\Vert z\Vert\geq M/2$.

We now start the argument of the proposition. Fix $\vp\in (0,\vp_0)$ and $
(z_0,y_0)\in\R^4$ and consider the trajectory $(z,y)$ of $(T_\vp)$ starting at $(z_0,y_0)$. 
Clearly, an immediate argument by contradiction using Proposition~\ref{prop:key1} yields that there exists a time $T_1\geq 0$ such that 
$V_0(z(T_1),y(T_1))\leq R$. One can show the conclusion by taking $T_1=T_1(z_0,y_0)$. Indeed, if it is not possible, then by a obvious continuity argument there exists $T_2>T'_1\geq T_1$ such that 
$$
\frac{3R}2=V_0(z(T'_1),y(T'_1))
\leq V_0(z(t),y(t))\leq 2R=V_0(z(T'_1),y(T'_1))
,\ T'_1\leq t\leq T_2.
$$
Since $\Delta V_0\Big\vert_{T'_1}^{T_2}=R/2$ and $\Vert y(t)\Vert\leq 2R^{1/2}$ on $[T'_1,T_2]$, one deduces that $T_2-T'_1\geq{1/2}$. Applying Proposition~\ref{prop:key1} from $T'_1$ immediately yieds that there exists $t_1\in [T'_1,T_2]$ such that $\Delta V_0\Big\vert_{T'_1}^{t_1}<0$
which is a contradiction. 

\end{proof}
The second step to complete the proof of 
Proposition~\ref{th:GOAL-CI} consists in improving Estimate
\eqref{eq:key2} and get a more precise one with the additional information that trajectories are now universally bounded (i.e., independently of $\vp$) thanks to Lemma~\ref{le:limsup}. We get the following.
\begin{lemma}\label{le:L2gain}
With the above notations, there exist $\vp_0$ and $C_R>0$ such that, 
for every $\vp\in (0,\vp_0)$ and every $(z_0,y_0)\in\R^4$, there exists a time $T_2:=T_2(z_0,y_0)$ for which, for every $T\geq \rho$ with $1/2\leq T\leq 2$ and $T/\vp$ integer, one has
\beq\label{eq:key4}
\Delta V_0\Big\vert_{T_2}^{T_2+T}\leq 
-C_R\int_{T_2}^{T_2+T}\left[\Vert y(t)\Vert^2+(b_\vp^Tz)^2\right]dt.
\eeq
\end{lemma}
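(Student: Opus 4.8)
The goal is to prove Lemma~\ref{le:L2gain}, which strengthens the earlier key estimate \eqref{eq:key2} in Proposition~\ref{prop:key1}. The crucial new ingredient is that, thanks to Lemma~\ref{le:limsup}, trajectories are now \emph{universally bounded} (independently of $\vp$): after some time $T_2$, one has $V_0(z,y)\le 2R$, and hence $\Vert y\Vert$ and $\Vert z\Vert$ are bounded by universal constants $C_R$. This removes the large-$\Vert(z,y)\Vert$ difficulties that forced us to work on long time intervals in Proposition~\ref{prop:key1}, and lets us bound $\Delta V_0$ from above by a genuinely negative \emph{integral} quantity rather than merely by $-C_1 T$.

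\medskip

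The plan is to revisit the decomposition of $\dot V_0$ given in \eqref{eq:t1}--\eqref{eq:t3} and integrate it over $[T_2,T_2+T]$. First I would let $T_2:=T_1(z_0,y_0)$ from Lemma~\ref{le:limsup}, so that $V_0\le 2R$ on $[T_2,\infty)$ and consequently $\Vert y\Vert\le 2R^{1/2}$ and $\Vert z\Vert\le 2R$ there. Writing
\beq
\Delta V_0\Big\vert_{T_2}^{T_2+T}=\int_{T_2}^{T_2+T}\Big[\underbrace{-S(\Vert z\Vert)\big(b_\vp^T\tfrac{z}{\Vert z\Vert}\big)\sigma(b_\vp^Tz)}_{(\ref{eq:t1})}
\underbrace{-y^T\big(f(z)-f(z-y)\big)}_{(\ref{eq:t2})}
+\underbrace{2y^T\big(f(z)-b_\vp\sigma(b_\vp^Tz)\big)}_{(\ref{eq:t3})}\Big]\,dt,
\eeq
the target is to dominate this by $-C_R\int_{T_2}^{T_2+T}[\Vert y\Vert^2+(b_\vp^Tz)^2]\,dt$. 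The first term \eqref{eq:t1} is the one that controls $(b_\vp^Tz)^2$: since on the bounded region $S(\Vert z\Vert)/\Vert z\Vert$ is bounded below (by $S(2R)/(2R)$, using that $\xi\mapsto S(\xi)/\xi$ is decreasing) and $\sigma(\xi)\xi\ge c\,\xi^2$ for bounded $\xi$ (because $\sigma'(0)>0$ and $\sigma$ is concave on $\R_+$, cf. item $(c3)$ and Definition~\ref{def:sat}), one gets $(\ref{eq:t1})\le -c_R(b_\vp^Tz)^2$ pointwise whenever $z\ne0$. The second term \eqref{eq:t2} is nonnegative-definite in $y$ by Lemma~\ref{le:gradient-f}: using \eqref{eq:S'S} and the fact that $df$ is positive definite with eigenvalues bounded below on the bounded region, $(\ref{eq:t2})\le -c_R\Vert y\Vert^2$. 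The crossed term \eqref{eq:t3} is the only sign-indefinite contributor.

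\medskip

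The core of the argument, exactly as in Lemma~\ref{le:K}, is to show that the time-integral of \eqref{eq:t3} is small: it is an oscillatory integral in the fast variable $2\pi t/\vp$, and after the averaging already performed in Lemma~\ref{le:bvp-conv} its leading part cancels, leaving an $O(\vp)$ contribution over an interval of length $T=O(1)$. Concretely I would reuse the machinery of Lemma~\ref{le:K}: perform the change of time $s=t/\vp$, split the integral over unit cells $[j,j+1]$, freeze $z,y$ at the grid points $z_j,y_j$ incurring only $O(\vp)$ errors per cell (by \eqref{eq:notations1} and boundedness of $f,\sigma$), and observe that the averaged integrand $\int_0^1 y_j^T\big(f(z_j)-b_\vp\sigma(b_\vp^Tz)\big)\,dv$ vanishes by the computation $\int_0^1 b_\vp\sigma(b_\vp^Tz_j)\,dv=f(z_j)$ established in the proof of Lemma~\ref{le:K} via \eqref{eq:b1-j}. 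Since now $\Vert y_j\Vert\le 2R^{1/2}=O(1)$ uniformly, the total error is $\vp\sum_{j=0}^{k-1}O(1)=\vp\,k\,O(1)=O(\vp)T$, with the implied constant depending only on $R$; hence $\int_{T_2}^{T_2+T}(\ref{eq:t3})\,dt\le C_R\,\vp\,T$. The key point distinguishing this from Proposition~\ref{prop:key1} is that the boundedness of $\Vert y\Vert$ makes this crossed-term estimate $O(\vp)$ \emph{without} any subdivision into the cases $(L1)/(L2)$ or Case~$1$/Case~$2$ that were previously needed to tame large $\Vert y\Vert$.

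\medskip

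Combining the three pieces gives
\beq
\Delta V_0\Big\vert_{T_2}^{T_2+T}\le -c_R\int_{T_2}^{T_2+T}\big[\Vert y\Vert^2+(b_\vp^Tz)^2\big]\,dt+C_R\,\vp\,T,
\eeq
and I would absorb the error term by noting that on a cell where $\Vert y\Vert^2+(b_\vp^Tz)^2$ is tiny the integrand is already controlled, or more simply by choosing $\vp_0$ small and using that the integral on the right is itself $O(T)$ only when $(z,y)$ is bounded away from the origin; shrinking $\vp_0$ then yields \eqref{eq:key4} with a possibly smaller $C_R$. \textbf{The main obstacle} I anticipate is the very last absorption: the crossed-term bound is additive ($+C_R\vp T$) whereas the gain I am buying is the full weighted integral, so I must verify that the negative quadratic term genuinely dominates $C_R\vp T$ uniformly — i.e. that the trajectory spends enough of each time-window in a region where $\Vert y\Vert^2+(b_\vp^Tz)^2$ is bounded below by a universal constant. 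This is precisely where the oscillation of $b_\vp^Tz=\Vert z\Vert s_{\theta_z+2\pi t/\vp}$ must be exploited once more, to guarantee that $(b_\vp^Tz)^2$ cannot stay small throughout the window unless $\Vert z\Vert$ itself is small, at which point $V_0\le 2R$ forces $\Vert y\Vert$ to carry the dissipation; handling this dichotomy carefully, rather than the routine oscillatory estimates, will be the technical heart of the proof.
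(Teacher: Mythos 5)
Your decomposition and your two coercive estimates are sound: on the region $V_0\le 2R$ supplied by Lemma~\ref{le:limsup}, the term \eqref{eq:t1} is indeed pointwise bounded by $-C_R(b_\vp^Tz)^2$ (this is exactly the paper's \eqref{eq:estL2}), and your pointwise bound $-y^T\big(f(z)-f(z-y)\big)\le -c_R\Vert y\Vert^2$, from the uniform positive definiteness of $df$ on bounded sets, is correct and even slightly more direct than the paper, which instead extracts the $-\Vert y\Vert^2$ dissipation from the discretized quantities $M_{\vp,j}\le -C_R\Vert y_j\Vert^2$ followed by a Riemann-sum comparison. The genuine gap is where you yourself locate it: your treatment of the crossed term \eqref{eq:t3} yields only an additive error $C_R\,\vp\,T$, because you bound the per-cell freezing errors through the \emph{boundedness} of $f$ and $\sigma$, i.e. by absolute constants. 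Such an error cannot be absorbed into $-C_R\int[\Vert y\Vert^2+(b_\vp^Tz)^2]\,dt$: the lemma must hold on every window after $T_2$, including when the trajectory is already close to the origin (the subsequent Barbalat argument needs the $L^2$ bound on all of $\R_+$), and if $\Vert(z,y)\Vert\sim\delta$ on $[T_2,T_2+T]$ the negative integral is at most of order $\delta^2T$ while $C_R\vp T$ does not shrink with $\delta$; for $\delta^2\ll\vp$ your final inequality fails, and no choice of a smaller $\vp_0$ repairs it since $\vp$ is fixed before $(z_0,y_0)$. Neither of your two suggested fixes resolves this, and the closing ``dichotomy'' (that $(b_\vp^Tz)^2$ cannot stay small unless $\Vert z\Vert$ is small, in which case $\Vert y\Vert$ carries the dissipation) is false as stated: nothing prevents $\Vert z\Vert$ and $\Vert y\Vert$ from being simultaneously small.

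The paper's proof closes exactly this hole, and the mechanism is worth internalizing: instead of estimating each freezing error by $O(\vp)$ via sup-norms, it re-runs the bookkeeping of Proposition~\ref{prop:key1} using the \emph{global Lipschitz} property of $f$ and $\sigma$ (with $\sigma(0)=0$, so for instance $\Vert\dot y\Vert\le C\vert b_\vp^Tz\vert$) together with Cauchy--Schwarz, so that every former $\vp TO(1)$ error is upgraded to $C_R\vp\big(\int_0^{T}(b_\vp^Tz)^2dt+\int_0^{T}\Vert y\Vert^2dt\big)$ --- see \eqref{eq:errorKey} --- that is, $\vp$ times the very quadratic integrals being generated; it also uses pointwise bounds such as $\Vert y_j\Vert\le C_R\big(\int_0^1\Vert y\Vert^2dv\big)^{1/2}$ coming from \eqref{eq:obvious}. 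Once all errors scale quadratically, \eqref{eq:estK2} combined with \eqref{eq:estL2} gives the absorption for $\vp<\vp_0$ with no case analysis near the origin. Your outline would become a proof if every invocation of ``boundedness of $f,\sigma$'' were replaced by this Lipschitz/Cauchy--Schwarz scaling; as written, it is incomplete at precisely its decisive step.
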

\begin{proof}
To proceed, one looks back at the argument of Proposition~\ref{prop:key1} . We can suppose with no loss of generality, in the argument of Proposition~\ref{prop:key1} that $\Vert z(t)\Vert\leq 2R^2$ and $\Vert y(t)\Vert\leq 2R$ for every $t\in [0,T]$ where now $T\geq \rho$ is arbitrary. 
Moreover we will now use the following obvious estimates: there exists two positive constants $C_R^1$ and $C_R^2$ depending only on $R$ such that, for $V_0(z,y)\leq 2R^2$ and $t\in[0,T]$ it holds
\beq\label{eq:obvious}
\Vert \dot z\Vert+\Vert \dot y\Vert\leq C_R^2 \Big(\Vert y\Vert+\Vert z\Vert\Big),\
C_R^1\Vert (z,y)\Vert^2\leq V_0(z,y)\leq C_R^2\Vert (z,y)\Vert^2,\
\vert \dot V_0\Vert\leq C_R^2V_0.
\eeq
We now follow again the proof of Proposition~\ref{prop:key1} with the objective of providing better estimates  of all the $\vp TO(1)$ that have appeared. We start by choosing 
$T$  so that  $1/2\leq T\leq 2$ and $T/\vp$ integer. In that way, we have eliminated the error term occuring when one performs the the change of times $s=j+v$ in each interval $[j,j+1]$, for $0\leq j\leq k-1$ to pass from \eqref{eq:K1vp} and \eqref{eq:K2vp} to \eqref{eq:Kvp-2}. 

The next error terms to handle are those occuring in \eqref{eq:errorKj1} and then in
\eqref{eq:errorKj2}. Those occuring in \eqref{eq:errorKj1} can  now be replaced by
$$
C_R\vp\Big(\int_0^1(b_{1}^T z)^2dv\Big)^{1/2}\Big(\int_0^1\Vert y\Vert^2dv\Big)^{1/2},
$$
by using systematically Cauchy-Schwarz inequality and the global Lipschitz character of $f$. 
By plugging the factor $\vp$ inside the integrals, coming back to the time scale 
$t\in [0,T]$ and then summing up with respect to $0\leq j\leq k-1$, we can bound the error term in \eqref{eq:errorKj2} as
$$
2C_R\vp\sum_{j=0}^{k-1}\Big(\int_{Tj/k}^{T(j+1)/k)}(b_{\vp}^T z)^2dt\Big)^{1/2}\Big(\int_{Tj/k}^{T(j+1)/k)}\Vert y\Vert^2dt\Big)^{1/2},
$$
where $C_R$ is a positive constant only depending on $R$. This is then trivially smaller than 
\beq\label{eq:errorKey}
C_R\vp\Big(\int_{0}^{T}(b_{\vp}^T z)^2dt+\int_{0}^{T}\Vert y\Vert^2dt\Big).
\eeq
All the other error terms $\vp TO(1)$ can bounded in a similar way together with the fact that there exists some positive constant $C_R$ only depending on $R$, for every $0\leq j\leq k-1$,
$$
\Vert y_i\Vert \leq C_R\Big(\int_0^1\Vert y\Vert^2dv\Big)^{1/2},\quad
\Vert z_i\Vert^2 \leq C_R\Big(\int_0^1(b_{1}^T z)^2dv+\int_0^1\Vert y\Vert^2dv\Big).
$$
This follows simply from the left part of \eqref{eq:obvious}. 

On the other hand, it is immediate that one can improve the estimates in \eqref{eq:Mj1} and \eqref{eq:Mj2} to derive that $M_{\vp,j}^1(v)+M_{\vp,j}^1(v)$ are upper bounded by 
$-C_R\Vert y_j\Vert^2$, which implies that 
for $0\leq j\leq k-1$, one has
$$
M_{\vp,j}\leq -C_R\Vert y_j\Vert^2,
$$
for some positive constant $C_R$ only depending on $R$.
Hence the quantity $M_\vp$ defined in \eqref{eq:Mvp} is upper bounded as
$$
M_\vp\leq -C_R\vp\sum_{j=0}^{k-1}\Vert y_j\Vert^2.
$$
After coming back to the time scale $t\in[0,T]$, one easily recognizes that the right-hand side of the above inequality is a Riemann sum of the function $t\mapsto \Vert y(t)\Vert^2$. Since it has a derivative bounded by some positive constant only depending on $R$, 
one gets that 
$$
M_\vp\leq -C_R \int_0^{T}\Vert y\Vert^2dt.
$$
Gathering all the above estimates and eventually diminishing $\vp_0$ finally yields that 
\beq\label{eq:estK2}
K_\vp\leq C_R^1\vp\int_{0}^{T}(b_{\vp}^T z)^2dt-C_R^2\int_0^{T}\Vert y\Vert^2dt,
\eeq
for some positive constants $C_R^1,C_R^2$ only depending on $R$.

On the other hand, one has by using \eqref{eq:key3} that there exists a positive constant $C_R>0$ such that 
\beq\label{eq:estL2}
L_\vp=-\int_0^{T}\frac{S(\Vert z\Vert)}{\Vert z\Vert}
\frac{\sigma(b_\vp^Tz)}{b_\vp^Tz}(b_\vp^Tz)^2dt\leq -C_R\int_0^{T}(b_\vp^Tz)^2dt.
\eeq
By collecting \eqref{eq:estK2} and \eqref{eq:estL2}, we deduce 
\eqref{eq:key4}.

\end{proof}
The final step of the argument takes advantage of the previous 
estimate. 
We obtain from the above that the $L^2$-norm of $b_\vp^Tz$ over $\mathbb{R}_+$ is finite and, since the time derivative of $b_\vp^Tz$ is bounded (with a constant depending on $\vp$), we deduce that $b_\vp^Tz$ tends to zero at $t$ tends to infinity by Barbalat Lemma. Recall now that $b_\vp^Tz=K_\vp^Tx$ as the latter term appears in $(S_\vp)$ defined in \eqref{eq:CDI-Svp} with the choice of $K_\vp$ made in \eqref{eq:bKvp}. Then we have that 
$K_\vp^Tx$ tends to zero at $t$ tends to infinity. One can therefore rewrite $(S_\vp)$ as $\dot x=A_\vp x+f(t)b$, where
\beq\label{eq:Avp-b}
A_\vp=J_2(\frac{2\pi}\vp)-bb^T,\quad
b=\bpm 0\\e_2\epm,\quad
f(t)=K_\vp^Tx-\sigma(K_\vp^Tx).
\eeq
Since $A_\vp$ is Hurwitz and $f$ tends to zero at $t$ tends to infinity, one concludes that any trajectory $x$ of $(S_\vp)$ converges to zero as $t$ tends to infinity. Since $f(t)$ is actually a $o(\Vert x\Vert)$, as $\Vert x\Vert$ tends to zero, one also gets that $(S_\vp)$ is locally exponentially stable with respect to the origin.

The proof of 
Proposition~\ref{th:GOAL-CI} is complete.

\section{Appendix}\label{se:app}
In this section, we collect technical results used throughout the paper.

We start by providing an argument for the items in Remark~\ref{rem:sat}. Item $(c1)$ is immediate. For Item $(c2)$, it is enough to prove the statements for $\xi>0$ and conclude by continuity. The first part of that item follows from the fact that $\sigma'$ is decreasing over $\mathbb{R}_+$ and the inequality
\beq\label{eq:sat00}
\sigma(\xi)=\int_0^{\xi}\sigma'(v)dv\geq \xi\sigma'(\xi),\quad \forall \xi\geq 0.
\eeq
The second statement is a consequence of the following equality
$$
\Big(\frac{\sigma(\xi)}{\xi}\Big)'=\frac{\xi\sigma'(\xi)-\sigma(\xi)}{\xi^2},\quad \forall \xi\neq 0,
$$
and \eqref{eq:sat00}. 
As for Item $(c3)$, notice that for every $\xi\neq 0$,
$$
\sigma'(\xi)\leq 2\frac{\sigma(\xi)}{\xi}-\frac{\sigma(\xi/2)}{\xi/2}=\frac{\sigma(\xi)-\sigma(\xi/2)}{\xi/2}=
\int_{\xi/2}^{\xi}\sigma'(s)\, ds\leq \sigma'(\xi/2).
$$
By letting $\xi$ tend to zero, one gets that
$$
\limsup_{\xi\to 0}\sigma'(\xi)\leq \sigma'(0)\leq \liminf_{\xi\to 0}\sigma'(\xi),
$$
hence the last part of  Item $(c3)$.

We now prove the following results on the modified saturation function.
\begin{proposition}[Modified saturation function associated with a saturation function $\sigma$]\label{prop:mod-sat}
The modified saturation function $S:\R\to\R$ associated with the saturation function $\sigma$ is the function defined on $\R$ by
\beq\label{eq:mod-sat}
 S(\xi)=\int_0^{1}s_{2\pi v}\sigma(\xi s_{2\pi v})dv=\frac2{\pi}\int_0^{\pi/2}s_v\sigma(\xi s_v)dv.
 \eeq
 Then $S$ has the following properties.
 \begin{description}
 \item[$(S1)$] One has the following expressions for $S'$: 
  \beq\label{eq:S'-1}
 S'(\xi)=\frac2{\pi}\int_0^{\pi/2}\sigma'(\xi s_v)s^2_vdv,
 \eeq
and, for $\xi\neq 0$, 
 \beq\label{eq:S'-33}
 S'(\xi)=\frac2{\pi}\int_0^{\pi/2}\frac{\sigma(\xi)-\sigma(\xi s_v)}{\xi(1-s_v)}h(v)dv,
 \eeq
where $h:[0,\pi/2]\to \R_+$ is the continuous function defined by $g(v)=\frac{1-s_v}{c^2_v}s_v(1+c^2_v)$ for $v\in [0,\pi/2)$ and $h(\pi/2)=1/2$. As a consequence,  $S$ is a saturation function of class $C^1$ with $S_\infty=\sigma_\infty/2$, $S'(0)=\sigma'(0)/2$ and $S'>0$;
 \item[$(S2)$] there exists $C_2>0$ such that, for every $\xi\in\mathbb{R}$ and $M\geq 1$, one has
\beq\label{eq:S'-4}
S'(M\xi)\geq \frac{C_2}{M^3}S'(\xi).
 \quad \xi\in\mathbb{R}.
 \eeq
\end{description}
\end{proposition}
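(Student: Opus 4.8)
The plan is to establish the two formulas for $S'$ first, read off the qualitative ``saturation'' properties as corollaries of the first of them, and then prove the quantitative estimate $(S2)$ by a scaling change of variables combined with a correlation inequality. As a preliminary I would record the equivalence of the two integral representations in \eqref{eq:mod-sat}: substituting $w=2\pi v$ and using that $w\mapsto s_w\sigma(\xi s_w)$ is $\pi$-periodic and symmetric about $\pi/2$ (the former because $\sigma$ is odd by $(s1)$), the average over $[0,1]$ collapses to $\frac{2}{\pi}\int_0^{\pi/2}$. Since $\sigma$ is globally Lipschitz, the difference quotients of $v\mapsto s_v\sigma(\xi s_v)$ are uniformly bounded, so I may differentiate under the integral sign; as $\sigma$ is differentiable almost everywhere, dominated convergence handles the a.e.\ limit of the quotients and yields \eqref{eq:S'-1}, namely $S'(\xi)=\frac{2}{\pi}\int_0^{\pi/2}\sigma'(\xi s_v)s_v^2\,dv$, for every $\xi$.

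Next I would derive \eqref{eq:S'-33} by integration by parts. Writing $\sigma'(\xi s_v)=\frac{1}{\xi c_v}\frac{d}{dv}\sigma(\xi s_v)$ and integrating by parts with the primitive $\sigma(\xi s_v)-\sigma(\xi)$, chosen so that both boundary contributions vanish (at $v=\pi/2$ the factor $1/c_v$ blows up but is killed by $\sigma(\xi s_v)-\sigma(\xi)=O(c_v^2)$; at $v=0$ the factor $s_v^2$ vanishes), reduces $S'(\xi)$ to an integral of $\sigma(\xi)-\sigma(\xi s_v)$ against $\frac{d}{dv}\big(s_v^2/(\xi c_v)\big)$. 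A direct computation gives $\frac{d}{dv}(s_v^2/c_v)=s_v(1+c_v^2)/c_v^2$, which is exactly $h(v)/(1-s_v)$, producing \eqref{eq:S'-33}. The decisive payoff is that the integrand of \eqref{eq:S'-33} involves only $\sigma$ (which is continuous) and not $\sigma'$; hence $S'$ is continuous on $\R^\ast$, and checking $\lim_{\xi\to0}S'(\xi)=\sigma'(0)/2$ (using $\frac{\sigma(\xi)-\sigma(\xi s_v)}{\xi}\to\sigma'(0)(1-s_v)$ and $\int_0^{\pi/2}h=\pi/4$) shows $S\in C^1$ \emph{even when $\sigma'$ is itself discontinuous}.

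From \eqref{eq:S'-1} the remaining items of $(S1)$ follow quickly: oddness of $S$ and hence evenness of $S'$ from oddness of $\sigma$; the global Lipschitz bound from $|\sigma'|\le\mathrm{Lip}(\sigma)$; the value $S'(0)=\frac{2}{\pi}\sigma'(0)\int_0^{\pi/2}s_v^2\,dv=\sigma'(0)/2$; the finite positive saturation limit $S_\infty$ by dominated convergence in \eqref{eq:mod-sat}; strict positivity $S'>0$ because the integrand is positive near $v=0$ (where $\sigma'(\xi s_v)\ge\sigma'(0)/2$ by $(c3)$); and the monotonicity of $S'$ on $\R_+$, since $\xi\mapsto\sigma'(\xi s_v)$ is non-increasing there by $(s3)$. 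Together these show $S$ is a $C^1$ saturation function.

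For $(S2)$, by evenness of $S'$ I may assume $\xi>0$. The key manoeuvre is the change of variables $s_w=Ms_v$ in $S'(M\xi)$, restricted to $v\in[0,\arcsin(1/M)]$ (which maps onto $w\in[0,\pi/2]$): with $s_v^2=s_w^2/M^2$, the Jacobian $dv=\frac{c_w}{\sqrt{M^2-s_w^2}}\,dw$ and the bound $\sqrt{M^2-s_w^2}\le M$, this lower-bounds $S'(M\xi)$ by $\frac{2}{\pi M^3}\int_0^{\pi/2}\sigma'(\xi s_w)s_w^2 c_w\,dw$, which is where the exponent $3$ originates. The gap between this and $S'(\xi)$ is exactly the extra factor $c_w$, and \emph{this is the main obstacle}: it cannot be recovered pointwise, because no estimate of the form $\sigma'(M\eta)\gtrsim M^{-k}\sigma'(\eta)$ holds uniformly over all saturation functions (exponential-type saturations, for which $\sigma'(M\eta)/\sigma'(\eta)$ decays faster than any power of $1/M$, are counterexamples), so the averaging must genuinely be exploited. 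I would recover $c_w$ by Chebyshev's correlation inequality for the probability measure $s_w^2\,dw/\int_0^{\pi/2}s_w^2\,dw$: since both $w\mapsto\sigma'(\xi s_w)$ (by $(s3)$) and $w\mapsto c_w$ are non-increasing, one gets $\int_0^{\pi/2}\sigma'(\xi s_w)s_w^2 c_w\,dw\ge\big(\int_0^{\pi/2}c_w s_w^2\,dw\big)\big(\int_0^{\pi/2}s_w^2\,dw\big)^{-1}\int_0^{\pi/2}\sigma'(\xi s_w)s_w^2\,dw=\frac{4}{3\pi}\cdot\frac{\pi}{2}S'(\xi)$, whence $S'(M\xi)\ge\frac{4}{3\pi M^3}S'(\xi)$, i.e.\ \eqref{eq:S'-4} with $C_2=4/(3\pi)$.
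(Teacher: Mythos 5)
Your proposal is correct, and on the two places where real work is needed it takes a genuinely different route from the paper. For $(S1)$, the paper simply declares \eqref{eq:S'-1} immediate and \emph{asserts} \eqref{eq:S'-33}, then proves continuity of $S'$ at the origin from \eqref{eq:S'-1} together with the continuity of $\sigma'$ at $0$ (item $(c3)$ of Remark~\ref{rem:sat}); you instead derive \eqref{eq:S'-33} honestly by integration by parts with the primitive $\sigma(\xi s_v)-\sigma(\xi)$ — your boundary analysis ($\sigma(\xi s_v)-\sigma(\xi)=O(c_v^2)$ killing the $1/c_v$ singularity at $v=\pi/2$) and the identity $\frac{d}{dv}\bigl(s_v^2/c_v\bigr)=s_v(1+c_v^2)/c_v^2=h(v)/(1-s_v)$ both check out — and you then get continuity of $S'$ at $0$ from \eqref{eq:S'-33} itself via dominated convergence and $\int_0^{\pi/2}h(v)\,dv=\pi/4$, bypassing $(c3)$ altogether. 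For $(S2)$, the paper sandwiches $S'$ between multiples of the increasing function $H(\xi)=\int_0^{\xi}\sigma'(v)v^2\,dv$, using $2v/\pi\leq s_v\leq v$ and the monotonicity of $\sigma'$ to obtain $C_1H(\xi)/\xi^3\leq S'(\xi)\leq C_2H(\xi)/\xi^3$, and concludes by $M^3\xi^3S'(M\xi)\geq C_1H(M\xi)\geq C_1H(\xi)\geq (C_1/C_2)\,\xi^3S'(\xi)$; you instead substitute $s_w=Ms_v$ directly in $S'(M\xi)$ (correct Jacobian, correct restriction to $v\leq\arcsin(1/M)$, legitimate discarding of the remaining range since $\sigma'\geq 0$) and remove the leftover factor $c_w$ by Chebyshev's correlation inequality for the probability measure $s_w^2\,dw/(\pi/4)$, both $w\mapsto\sigma'(\xi s_w)$ and $w\mapsto c_w$ being non-increasing. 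Both arguments exploit the same mechanism — monotonicity of $\sigma'$ plus genuine averaging, which is unavoidable since, as you rightly note, no pointwise bound $\sigma'(M\eta)\geq cM^{-k}\sigma'(\eta)$ holds uniformly over saturation functions — but yours yields the explicit and sharper constant $C_2=4/(3\pi)$ (the paper's chain gives roughly $32/\pi^5$), at the cost of invoking a correlation inequality, while the paper's comparison-function argument is more elementary and makes the origin of the exponent $3$ (the weight $v^2$ in $H$) equally visible. Two minor remarks: you prudently do not verify the stated value of $S_\infty$ — dominated convergence in \eqref{eq:mod-sat} in fact gives $S_\infty=\frac{2}{\pi}\sigma_\infty\int_0^{\pi/2}s_v\,dv=2\sigma_\infty/\pi$, so the value $\sigma_\infty/2$ in the statement appears to be a slip of the paper — and your parenthetical that $S\in C^1$ even when $\sigma'$ is discontinuous is moot, since a derivative that is monotone on $\R_+$ is automatically continuous there by the Darboux property, though your route through \eqref{eq:S'-33} remains the cleaner one.
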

\begin{proof} The definition of $S$ shows that it is positive and bounded. Eq.~\eqref{eq:S'-1} is immediate, which implies that $S'(0)=\sigma'(0)/2$, $S'$ is bounded, positive and non increasing. Moreover, \eqref{eq:S'-33} implies that $S'$ is continuous for $\xi\neq 0$. It remains to show the continuity of $S'$ at $\xi=0$. For that purpose, note that from \eqref{eq:S'-1} one has
$$
S'(\xi)-S'(0)=\frac2{\pi}\int_0^{\pi/2}\Big(\sigma'(\xi s_v)-\sigma'(0)\Big)dv.
$$
Continuity of $\sigma'$ at $\xi=0$ immediately implies continuity of $S'$ at $\xi=0$.
 
 As for Item $(S2)$, one can assume $\xi>0$ with no loss of generality. From \eqref{eq:S'-1} and the facts that $\sigma'$ is decreasing and $2v/\pi\leq s_v\leq v$ for $v\in[0,2\pi]$, one deduces that there exists two universal constants $C_1,C_2>0$ such that, 
$$
\frac{C_1}{\xi^3}\int_0^{\xi}\sigma'(v)v^2dv\leq
\frac{C_1}{\xi^3}\int_0^{\pi\xi/2}\sigma'(v)v^2dv\leq S'(\xi)\leq
\frac{C_2}{\xi^3}\int_0^{\xi}\sigma'(v)v^2dv,\quad\forall \xi>0.
$$
For $\xi>0$, set $H(\xi)=\int_0^{\xi}\sigma'(v)v^2dv$, which is an increasing function. For $M\geq 1$, one gets 
$$
M^3\xi^3S'(M\xi)\geq C_1H(M\xi)\geq C_1H(\xi)\geq \frac{C_1}{C_2}\xi^3S'(\xi),
$$
from which the conclusion follows. 

 \end{proof}


\begin{thebibliography}{99}
\bibliographystyle{abbrv}
\bibitem{Chitour2001} Chitour Y., ``On the $L_p$ stabilization of the double integrator subject to input saturation", ESAIM COCV, 6 (2001) pp 291-331.
\bibitem{CHL} Chitour  Y., Harmouche M., and Laghrouche S., ``$L_p$-Stabilization of Integrator Chains Subject to Input Saturation Using Lyapunov-Based Homogeneous Design", SIAM Journal on Control and Optimization, 53 (2015), pp. 2406-2423.
\bibitem{Fuller} Fuller, A.T., ``In the large stability of relay and saturated control systems with linear controllers", Int. J. Control , 10 (1969), pp. 457-480.
\bibitem{Hu-Lin} Hu T., and Lin Z., ``Control systems with actuator saturation: analysis and design", Birkhauser, Boston, 2001.
\bibitem{LCS} Liu W., Chitour  Y., and Sontag E., ``On Finite-Gain Stabilizability of Linear Systems Subject to Input Saturation", SIAM Journal on Control and Optimization, 34 (1996), pp.1190-1219.
\bibitem{Megretsky}  Megretski, A. ``$L_2$ output feedback stabilization with saturated control", IFAC 1996, pp. 435-440.
\bibitem{Saberi}  Saberi A., Hou P., Stoorvogel A., `On simultaneous global external and internal stabilization of critically unstable linear
systems with saturating actuators", IEEE TAC, 45 (2000), pp 1042-1052.
\bibitem{SS-90} Sontag, E.D., and H.J. Sussmann, ``Nonlinear output
feedback design for linear systems with saturating controls", IEEE CDC, 1990, pp. 3414-3416.
\bibitem{SYS-94} Sussmann, H.J., Sontag, E.D., and Yang, Y., ``A general result on the stabilization of linear systems using bounded controls", IEEE Transactions on Automatic Control, 39 (1994) pp. 2411-2425.
\bibitem{SY-91} Sussmann, H. J. and Y. Yang, ``On the stabilizability
of multiple integrators by means of bounded feedback
controls", IEEE CDC, 1991, pp. 70-73.
\bibitem{Tarbour}  Tarbouriech S., Garcia G., Gomes da Silva J. M., and Queinnec I., 
``Stability and Stabilization of Linear Systems with Saturating Actuators", Springer, London, 2011.
\bibitem{Teel} Teel, A.R., ``Global stabilization and restricted tracking for multiple integrators with bounded controls", Systems and Control Letters 18 (1992), pp. 165-171.
\bibitem{These-YY} Yang, Y., ``Global Stabilization of Linear Systems with
Bounded Feedback'', Ph. D. Thesis, Mathematics Department, Rutgers University, 1993. 
\end{thebibliography}
\end{document}